\newtheorem*{rep@theorem}{\rep@title}
\newcommand{\newreptheorem}[2]{%
\newenvironment{rep#1}[1]{%
 \def\rep@title{#2 \ref{##1}}%
 \begin{rep@theorem}}%
 {\end{rep@theorem}}}
\newtheorem{thm}{Theorem}[section]
\newtheorem{prop}[thm]{Proposition}
\newtheorem{cor}[thm]{Corollary}
\newtheorem*{thm*}{Theorem}
\newtheorem*{problem*}{Problem}
\newtheorem{question}[thm]{Question}
\newtheorem*{claim*}{Claim}
\theoremstyle{definition}
\newtheorem{defi}[thm]{Definition}
\newtheorem{rem}[thm]{Remark}
\newcommand{\mc}{\mathcal}
\newcommand{\mf}{\mathfrak}
\newcommand{\mr}{\mathrm}
\newcommand{\C}{\mathbb{C}}
\newcommand{\N}{\mathbb{N}}
\newcommand{\Q}{\mathbb{Q}}
\newcommand{\R}{\mathbb{R}}
\newcommand{\Z}{\mathbb{Z}}
\newcommand{\la}{\langle}
\newcommand{\ra}{\rangle}
\renewcommand{\epsilon}{\varepsilon}
\renewcommand{\phi}{\varphi}
\renewcommand{\tilde}{\widetilde}
\renewcommand{\hat}{\widehat}
\newcommand{\acts}{\curvearrowright}
\newcommand{\dist}{\mr{dist}}
\DeclareMathOperator{\GL}{GL}
\DeclareMathOperator{\SL}{SL}
\DeclareMathOperator{\SO}{SO}
\DeclareMathOperator{\SU}{SU}
\DeclareMathOperator{\Sp}{Sp}
\DeclareMathOperator{\PSL}{PSL}
\DeclareMathOperator{\Spin}{Spin}
\DeclareMathOperator{\supp}{supp}
\newcommand{\dd}{\mr d}
\newcommand{\FF}{\mr F_{4(-20)}}
\newcommand{\WA}{\mr{WA}}
\newcommand{\WH}{\mr{WH}}
\DeclareFontFamily{U}{mathx}{\hyphenchar\font45}
\DeclareFontShape{U}{mathx}{m}{n}{
      <5> <6> <7> <8> <9> <10>
      <10.95> <12> <14.4> <17.28> <20.74> <24.88>
      mathx10
      }{}
\DeclareSymbolFont{mathx}{U}{mathx}{m}{n}
\DeclareMathAccent{\widecheck}{0}{mathx}{"71}
\DeclareMathAccent{\wideparen}{0}{mathx}{"75}
\numberwithin{equation}{section}
\begin{document}
\selectlanguage{english} % Select either of the sets loaded with babel

\begin{abstract}
In this paper we consider the class of connected simple Lie groups equipped with the discrete topology. We show that within this class of groups the following approximation properties are equivalent: (1) the Haagerup property; (2) weak amenability; (3) the weak Haagerup property (Theorem~\ref{thm:main}). In order to obtain the above result we prove that the discrete group $\GL(2,K)$ is weakly amenable with constant 1 for any field $K$ (Theorem~\ref{thm:WA1}).
\end{abstract}

%%%%%%%% Title %%%%%%%%%

\title[Approximation properties of simple Lie groups made discrete]{\texorpdfstring{Approximation properties of \\simple Lie groups made discrete}{Approximation properties of simple Lie groups made discrete}}

\author{S{\o}ren Knudby}
\address{Department of Mathematical Sciences, University of Copenhagen,
\newline Universitetsparken 5, DK-2100 Copenhagen \O, Denmark}
\email{knudby@math.ku.dk}

\author{Kang Li}
\address{Department of Mathematical Sciences, University of Copenhagen,
\newline Universitetsparken 5, DK-2100 Copenhagen \O, Denmark}
\email{kang.li@math.ku.dk}

\thanks{Both authors are supported by ERC Advanced Grant no.~OAFPG 247321 and the Danish National Research Foundation through the Centre for Symmetry and Deformation (DNRF92).}

\date{January 28, 2015}
\maketitle
%%%%%%%%% Text starts here %%%%%%%%%%
%\setcounter{tocdepth}{1}
%\tableofcontents
\parindent 0cm
\parskip 4pt

\section{Introduction}
Amenability for groups was first introduced by von Neumann in order to study the Banach-Tarski paradox. It is remarkable that this notion has numerous characterizations and one of them, in terms of an approximation property by positive definite functions, is the following: a locally compact (Hausdorff) group $G$ is amenable if there exists a net of continuous compactly supported, positive definite functions on $G$ tending to the constant function 1 uniformly on compact subsets of $G$.
%\cite[Proposition~8.5]{MR767264} and \cite[Proposition~18.3.5]{MR0458185}.
Later, three weak forms of amenability were introduced: the Haagerup property, weak amenability and the weak Haagerup property. In this paper we will study these approximation properties of groups within the framework of Lie theory and coarse geometry.
\begin{defi}[Haagerup property \cite{MR1852148}]
A locally compact group $G$ has the \emph{Haagerup property} if there exists a net of positive definite $C_0$-functions on $G$, converging uniformly to $1$ on compact sets.
\end{defi}

\begin{defi}[Weak amenability \cite{MR996553}]
A locally compact group $G$ is \emph{weakly amenable} if there exists a net $(\phi_i)_{i\in I}$ of continuous, compactly supported Herz-Schur multipliers on $G$, converging uniformly to $1$ on compact sets, and such that $\sup_i \|\phi_i\|_{B_2} < \infty$.

The \emph{weak amenability constant} $\Lambda_\WA(G)$ is defined as the best (lowest) possible constant $\Lambda$ such that $\sup_i \|\phi_i\|_{B_2} \leq\Lambda$, where $(\phi_i)_{i\in I}$ is as just described.
\end{defi}

\begin{defi}[The weak Haagerup property \cite{K-WH}]
A locally compact group $G$ has the \emph{weak Haagerup property} if there exists a net $(\phi_i)_{i\in I}$ of $C_0$ Herz-Schur multipliers on $G$, converging uniformly to $1$ on compact sets, and such that $\sup_i \|\phi_i\|_{B_2} < \infty$.

The \emph{weak Haagerup constant} $\Lambda_\WH(G)$ is defined as the best (lowest) possible constant $\Lambda$ such that $\sup_i \|\phi_i\|_{B_2} \leq\Lambda$, where $(\phi_i)_{i\in I}$ is as just described.
\end{defi}
Clearly, amenable groups have the Haagerup property. It is also easy to see that amenable groups are weakly amenable with $\Lambda_\WA(G)=1$ and that groups with the Haagerup property have the weak Haagerup property with $\Lambda_\WH(G)=1$. Also, $1\leq \Lambda_\WH(G)\leq\Lambda_\WA(G)$ for any locally compact group $G$, so weakly amenable groups have the weak Haagerup property.

It is natural to ask about the relation between the Haagerup property and weak amenability. The two notions agree in many cases, like generalized Baumslag-Solitar groups (see \cite[Theorem~1.6]{cornulier-valette-BS}) and connected simple Lie groups with the discrete topology (see Theorem~\ref{thm:main}). However, in the known cases where the Haagerup property coincides with weak amenability, this follows from classification results on the Haagerup property and weak amenability and not from a direct connection between the two concepts.
In general, weak amenability does not imply the Haagerup property and vice versa. In one direction, the group $\Z/2\wr \mathbb{F}_2$ has the Haagerup property \cite{MR2393636}, but is not weakly amenable \cite{MR2914879}. In the other direction, the simple Lie groups $\Sp(1,n)$, $n\geq 2$, are weakly amenable \cite{MR996553}, but since these non-compact groups also have Property (T) \cite[Section~3.3]{MR2415834}, they cannot have the Haagerup property. However, since the weak amenability constant of $\Sp(1,n)$ is $2n-1$, it is still reasonable to ask whether $\Lambda_\WA(G) = 1$ implies that $G$ has the Haagerup property. In order to study this, the weak Haagerup property was introduced in \cite{MR3146826,K-WH}, and the following questions were considered.
\begin{question}\label{question:constants}
For which locally compact groups $G$ do we have $\Lambda_\WA(G)=\Lambda_\WH(G)$?
\end{question}
\begin{question}\label{question:H-WH}
Is $\Lambda_\WH(G) = 1$ if and only if $G$ has the Haagerup property?
\end{question}
It is clear that if the weak amenability constant of a group $G$ is 1, then so is the weak Haagerup constant, and Question~\ref{question:constants} has a positive answer. In general, the constants differ by the example $\Z/2\wr \mathbb{F}_2$ mentioned before. There is an another class of groups for which the two constants are known to be the same.
\begin{thm}[\cite{HK-WH-examples}]
Let $G$ be a connected simple Lie group. Then $G$ is weakly amenable if and only if $G$ has the weak Haagerup property. Moreover, $\Lambda_\WA(G) = \Lambda_\WH(G)$.
\end{thm}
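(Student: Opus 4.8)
The plan is to prove the theorem by the classification of connected simple Lie groups according to real rank, reducing everything to a single inequality between the two constants. Since weak amenability always implies the weak Haagerup property with $\Lambda_\WH(G) \le \Lambda_\WA(G)$, both the biconditional and the equality of constants follow once we establish the reverse inequality $\Lambda_\WA(G) \le \Lambda_\WH(G)$ (with the convention $\infty \le \infty$). Because both $\Lambda_\WA$ and $\Lambda_\WH$ are invariant under local isomorphism and pass between a group and its lattices, it suffices to treat one group in each local isomorphism class, and the answer will depend only on the restricted root system of the Lie algebra of $G$.

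First I would dispose of the two extreme cases. If $G$ has real rank $0$ it is compact, hence amenable, so $\Lambda_\WA(G) = \Lambda_\WH(G) = 1$. If $G$ has real rank at least $2$, then $G$ is not weakly amenable, so $\Lambda_\WA(G) = \infty$; here I would show that $G$ also fails the weak Haagerup property, i.e.\ $\Lambda_\WH(G) = \infty$. The point is that such a $G$ contains a closed subgroup locally isomorphic to $\SL(3,\R)$ or to $\Sp(4,\R)$, and one shows directly that these groups do not have the weak Haagerup property; since the restriction of a $C_0$ Herz--Schur multiplier to a closed subgroup is again a $C_0$ Herz--Schur multiplier of no larger $B_2$ norm, $\Lambda_\WH$ does not increase when passing to a closed subgroup, and this forces $\Lambda_\WH(G) = \infty = \Lambda_\WA(G)$. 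Note that property (T) alone cannot be used here, since $\Sp(1,n)$ has property (T) yet does have the weak Haagerup property; the obstruction is genuinely a higher-rank phenomenon.

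The heart of the matter is real rank $1$, where up to local isomorphism $G$ is one of $\SO_0(1,n)$, $\SU(1,n)$, $\Sp(1,n)$, or $\FF$. The groups $\SO_0(1,n)$ and $\SU(1,n)$ have the Haagerup property, so $\Lambda_\WA = \Lambda_\WH = 1$ for these. For $\Sp(1,n)$ and $\FF$, which have property (T) and therefore fail the Haagerup property, the Cowling--Haagerup computation gives $\Lambda_\WA(\Sp(1,n)) = 2n-1$ and $\Lambda_\WA(\FF) = 21$. The upper bounds $\Lambda_\WH \le \Lambda_\WA$ are automatic, so the entire content reduces to the matching lower bounds $\Lambda_\WH(\Sp(1,n)) \ge 2n-1$ and $\Lambda_\WH(\FF) \ge 21$.

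To obtain these lower bounds I would first average over a maximal compact subgroup $K$: since averaging a Herz--Schur multiplier over $K \times K$ does not increase its $B_2$ norm and preserves uniform convergence to $1$ on compacta, it is enough to bound $K$-bi-invariant (that is, \emph{radial}) $C_0$ multipliers from below. By the Cartan decomposition such a multiplier is determined by its radial profile along the positive Weyl chamber $A^+$, and the $C_0$ hypothesis together with convergence to $1$ on compacta means exactly that this profile runs from value $\approx 1$ near the identity to value $0$ at infinity in $A^+$. The Cowling--Haagerup lower bound extracts the constants $2n-1$ and $21$ precisely from this transition, via the completely bounded norm of an operator built from the spherical-function expansion, the constants being governed by the restricted root multiplicities. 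The main obstacle, and the only genuinely new step, is to verify that this lower bound survives the weakening of hypotheses from compactly supported multipliers (as in weak amenability) to merely $C_0$ multipliers (as in the weak Haagerup property). This holds because the Cowling--Haagerup argument uses only that the radial profile tends to $1$ on compacta and vanishes at infinity --- both of which the $C_0$ condition supplies --- and never the stronger requirement of eventual vanishing. With the lower bounds in hand we obtain $\Lambda_\WA(G) \le \Lambda_\WH(G)$ in every case, completing the proof.
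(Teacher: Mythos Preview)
This theorem is not proved in the present paper; it is quoted from \cite{HK-WH-examples} as background for Theorem~\ref{thm:classification}. There is therefore no in-paper proof to compare your proposal against. That said, your outline is essentially the strategy of \cite{HK-WH-examples}: split by real rank, dispose of rank $0$ by amenability, dispose of rank $\ge 2$ via the higher-rank obstruction of \cite[Theorem~B]{HK-WH-examples}, and in rank one reduce the nontrivial cases $\Sp(1,n)$ and $\FF$ to matching the Cowling--Haagerup lower bound with a lower bound on $\Lambda_\WH$.

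Two points deserve care. First, your opening assertion that $\Lambda_\WH$ is invariant under local isomorphism is part of what is being proved and cannot be assumed at the outset; for groups with infinite center (the universal cover of $\SU(1,n)$) only the inequality \eqref{eq:mod-central} is available a priori, and the known invariance of $\Lambda_\WA$ under local isomorphism is itself a consequence of the explicit computations in \cite{MR996553,MR1079871}, not a general principle. Your case-by-case treatment largely avoids this, but you should make the reduction honest.

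Second, and more substantively, the sentence ``the Cowling--Haagerup argument uses only that the radial profile tends to $1$ on compacta and vanishes at infinity'' is exactly the content of the theorem for the property-(T) rank-one groups, and it is not a triviality. The lower bound in \cite{MR996553} is extracted from the asymptotic expansion of $K$-bi-invariant multipliers along $A^+$ in terms of spherical functions and the Harish-Chandra $c$-function; one must check that each estimate in that analysis survives when compact support is weakened to the $C_0$ condition. You have correctly located the crux, but asserting that it ``holds'' is not yet a proof---this verification is precisely what \cite{HK-WH-examples} supplies.
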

By the work of many authors
\cite{MR748862,MR996553,MR784292,MR1418350,UH-preprint,MR1079871}, 
 it is known that a connected simple Lie group $G$ is weakly amenable if and only if the real rank of $G$ is zero or one. Also, the weak amenability constants of these groups are known. Recently, a similar result was proved about the weak Haagerup property \cite[Theorem~B]{HK-WH-examples}. Combining the results on weak amenability and the weak Haagerup property with the classification of connected Lie groups with the Haagerup property \cite[Theorem~4.0.1]{MR1852148} one obtains the following theorem, which gives a partial answer to both Question~\ref{question:constants} and Question~\ref{question:H-WH}.
\begin{thm}\label{thm:classification}
Let $G$ be a connected simple Lie group. The following are equivalent.
\begin{enumerate}
	\item $G$ is compact or locally isomorphic to $\SO(n,1)$ or $\SU(n,1)$ for some $n\geq 2$.
	\item $G$ has the Haagerup property.
	\item $G$ is weakly amenable with constant 1.
	\item $G$ has the weak Haagerup property with constant 1.
\end{enumerate}
\end{thm}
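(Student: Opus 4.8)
The strategy is to show that each of the analytic conditions (2), (3), (4) is equivalent to the purely structural condition (1) by quoting the relevant classification theorems and then checking that the numerical constants single out the same family of groups. Two of the implications are formal and require no classification: since the Haagerup property always implies the weak Haagerup property with constant $1$, we have $(2)\Rightarrow(4)$; and the inequality $1\leq\Lambda_\WH(G)\leq\Lambda_\WA(G)$ recorded above gives $(3)\Rightarrow(4)$. It therefore remains to establish $(1)\Leftrightarrow(2)$ and $(1)\Leftrightarrow(3)$, and then to close the cycle by deducing (1) from (4).

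For $(1)\Leftrightarrow(2)$ I would simply specialize the classification of connected Lie groups with the Haagerup property \cite[Theorem~4.0.1]{MR1852148} to the simple case: among connected simple Lie groups, the Haagerup property holds exactly for the compact ones and for those locally isomorphic to $\SO(n,1)$ or $\SU(n,1)$, which is precisely condition (1). For $(1)\Leftrightarrow(3)$ I would invoke the resolution of the weak amenability problem for connected simple Lie groups \cite{MR748862,MR996553,MR784292,MR1418350,UH-preprint,MR1079871}. This work shows that such a $G$ is weakly amenable precisely when its real rank is $0$ or $1$, and it evaluates the constant in each case. The point to extract is that $\Lambda_\WA(G)=1$ holds for the compact groups (real rank $0$) and for the groups locally isomorphic to $\SO(n,1)$ or $\SU(n,1)$, whereas the remaining rank-one groups have a strictly larger constant, namely $\Lambda_\WA(\Sp(1,n))=2n-1$ for $n\geq 2$ and $\Lambda_\WA(\FF)>1$, and higher-rank groups are not weakly amenable at all. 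Thus $\Lambda_\WA(G)=1$ characterizes exactly the family in (1).

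It remains to close the cycle with an implication into (1), and for this the weak Haagerup constant must be controlled. Here I would use the theorem of \cite{HK-WH-examples} quoted above, asserting that a connected simple Lie group is weakly amenable if and only if it has the weak Haagerup property, with $\Lambda_\WA(G)=\Lambda_\WH(G)$. Combined with the preceding paragraph, this forces $\Lambda_\WH(G)=1$ to hold on exactly the same list of groups as $\Lambda_\WA(G)=1$, so that $(3)\Leftrightarrow(4)$ and in particular $(4)\Rightarrow(1)$, completing all four equivalences; alternatively one may quote the explicit weak Haagerup classification \cite[Theorem~B]{HK-WH-examples} directly. I do not expect a genuine obstacle in this argument, since all of its depth is imported from the cited classification results. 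The only real care needed is the bookkeeping of constants — in particular, verifying that $\Sp(1,n)$ (for $n\geq 2$) and $\FF$, which are weakly amenable and hence have the weak Haagerup property, are nonetheless excluded from (1) because their weak amenability (equivalently weak Haagerup) constant exceeds $1$.
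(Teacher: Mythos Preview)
Your proposal is correct and follows exactly the approach the paper indicates: the paper does not give a formal proof of Theorem~\ref{thm:classification} but states in the paragraph preceding it that the result is obtained by combining the weak amenability classification \cite{MR748862,MR996553,MR784292,MR1418350,UH-preprint,MR1079871}, the weak Haagerup classification \cite[Theorem~B]{HK-WH-examples}, and the Haagerup classification \cite[Theorem~4.0.1]{MR1852148}. Your write-up simply makes this combination explicit, including the bookkeeping that $\Sp(1,n)$ and $\FF$ are excluded because their constants exceed~$1$.
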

The purpose of this paper is to consider the same class of groups as in theorem above, but made discrete. When $G$ is a locally compact group, we let $G_\dd$ denote the same group equipped with the discrete topology. The idea of considering Lie groups without their topology (or with the discrete topology, depending on the point of view) is not a new one. For instance, a conjecture of Friedlander and Milnor is concerned with computing the (co)homology of the classifying space of $G_\dd$, when $G$ is a Lie group (see \cite{MR699007} and the survey \cite{MR1059871}).
%They conjecture that their (co)homology is the same as that of the classifying space of $G$. Milnor shows \cite{MR699007} that the conjecture is true for a Lie group $G$ if and only if it is true for any Lie group $H$ locally isomorphic to $G$. Moreover, he proves the conjecture for all solvable Lie groups and reduces the remaining part of the conjecture to the case of connected simple Lie groups.

Other papers discussing the relation between $G$ and $G_\dd$ include \cite{cornulier-jlt}, \cite{MR1181157} and \cite{MR1219692}. Since our focus is approximation properties, will we be concerned with the following question.

\begin{question}\label{question:discrete}
Does the Haagerup property/weak amenability/the weak Haagerup property of $G_\dd$ imply the Haagerup property/weak amenability/the weak Haagerup property of $G$?
\end{question}
It is not reasonable to expect an implication in the other direction. For instance, many compact groups such as $\SO(n)$, $n\geq 3$, are non-amenable as discrete groups. It follows from Theorem~\ref{thm:main} below (see also Proposition~\ref{prop:nonWH}) that when $n\geq 5$, then $\SO(n)$ as a discrete group does not even have the weak Haagerup property. It is easy to see that Question~\ref{question:discrete} has a positive answer for second countable, locally compact groups $G$ that admit a lattice $\Gamma$. Indeed, $G$ has the Haagerup property if and only if $\Gamma$ has the Haagerup property. Moreover, $\Lambda_\WA(\Gamma) = \Lambda_\WA(G)$ and $\Lambda_\WH(\Gamma) = \Lambda_\WH(G)$.
\begin{rem}
A similar question can of course be asked for amenability. This case is already settled: if $G_\dd$ is amenable, then $G$ is amenable \cite[Proposition~4.21]{MR767264}, and the converse is not true in general by the counterexamples mentioned above. A sufficient and necessary condition of the converse implication can be found in \cite{MR1181157}.
\end{rem}
Recall that $\SL(2,\R)$ is locally isomorphic to $\SO(2,1)$ and that $\SL(2,\C)$ is locally isomorphic to $\SO(3,1)$. Thus, Theorem~\ref{thm:classification} and the main theorem below together show in particular that Question~\ref{question:discrete} has a positive answer for connected simple Lie groups. This could however also be deduced (more easily) from the fact that connected simple Lie groups admit lattices \cite[Theorem~14.1]{MR0507234}.
\begin{thm}[Main Theorem]\label{thm:main}
Let $G$ be a connected simple Lie group, and let $G_\dd$ denote the group $G$ equipped with the discrete topology. The following are equivalent.
\begin{enumerate}
	\item $G$ is locally isomorphic to $\SO(3)$, $\SL(2,\R)$, or $\SL(2,\C)$.
	\item $G_\dd$ has the Haagerup property.
	\item $G_\dd$ is weakly amenable with constant 1.
	\item $G_\dd$ is weakly amenable.
	\item $G_\dd$ has the weak Haagerup property with constant 1.
	\item $G_\dd$ has the weak Haagerup property.
\end{enumerate}
\end{thm}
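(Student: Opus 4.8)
The six conditions sort into two genuinely new implications together with the soft permanence relations recalled in the introduction. Since a normalised positive definite function is a Herz--Schur multiplier of norm one, the Haagerup property gives (2)$\Rightarrow$(5); weak amenability with constant $1$ gives (3)$\Rightarrow$(5) and trivially (3)$\Rightarrow$(4); and compactly supported multipliers are in particular $C_0$, so (4)$\Rightarrow$(6) and (5)$\Rightarrow$(6). Thus each of (2)--(5) implies (6). The plan is therefore to prove the three remaining implications (1)$\Rightarrow$(3), (1)$\Rightarrow$(2) and (6)$\Rightarrow$(1); chasing the resulting diagram then yields the equivalence of all six statements.

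For (1)$\Rightarrow$(3) I would lean entirely on Theorem~\ref{thm:WA1}. Every group locally isomorphic to $\SL(2,\R)$, $\SL(2,\C)$ or $\SO(3)$ is, up to a central extension by an amenable group, a subgroup or a finite quotient of $\GL(2,\R)$ or $\GL(2,\C)$: indeed $\SL(2,\R)\le\GL(2,\R)$, $\SL(2,\C)\le\GL(2,\C)$ and $\SU(2)\le\GL(2,\C)$, while $\PSL$ and $\SO(3)$ arise as quotients by finite central subgroups, and the various covers (e.g.\ $\widetilde{\SL(2,\R)}$) as central extensions with kernel $\Z$ or finite. Since $\Lambda_\WA=1$ is inherited by subgroups, is unchanged by quotients by finite normal subgroups, and is preserved under extensions with amenable normal kernel, Theorem~\ref{thm:WA1} forces $\Lambda_\WA(G_\dd)=1$. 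For (1)$\Rightarrow$(2) I would run the same bookkeeping on the Haagerup property, using that $\SL(2,K)_\dd$ has the Haagerup property for every field $K$ (obtained from a proper isometric action assembled from the actions of $\SL_2$ on the Bruhat--Tits trees and hyperbolic spaces attached to the places of $K$), and that the Haagerup property passes to subgroups and is stable under extensions by amenable normal subgroups.

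The substance of the theorem is the converse (6)$\Rightarrow$(1), which I would prove in contrapositive form: if $G$ is connected simple but \emph{not} locally isomorphic to $\SO(3)$, $\SL(2,\R)$ or $\SL(2,\C)$, then $G_\dd$ fails the weak Haagerup property. The engine is that $\Lambda_\WH$ passes to subgroups, together with the facts recalled in the introduction that a lattice $\Gamma\le H$ satisfies $\Lambda_\WH(\Gamma)=\Lambda_\WH(H)$ and that $\Lambda_\WH(H)=\infty$ as soon as $H$ is connected simple of real rank $\ge2$ (hence also for any product having such a factor). The idea is to exhibit a higher-rank lattice inside $G_\dd$. The three types allowed in (1) are precisely the simple real Lie algebras whose complexification is $\mathfrak{sl}(2,\C)$ or $\mathfrak{sl}(2,\C)\oplus\mathfrak{sl}(2,\C)$, that is, those of complex type $A_1$ or $D_2$; in every other case the complex type admits a \emph{simple} real form of real rank $\ge2$, for instance its split form. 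Using restriction of scalars over a real quadratic field $F$ I would then build an $F$-form $\mathbf G$ of $G$ whose two archimedean completions are $G$ itself and such a rank-$\ge2$ simple real form $H$; the arithmetic group $\Gamma=\mathbf G(\mathcal O_F)$ is an irreducible lattice in $G\times H$, and after passing to a torsion-free finite-index subgroup its projection to the first factor is injective. This realises $\Gamma$ as a subgroup of $G_\dd$ with $\Lambda_\WH(\Gamma)=\Lambda_\WH(G\times H)=\infty$, so $G_\dd$ cannot have the weak Haagerup property. The compact case $G=\SO(n)$, $n\ge5$, is exactly Proposition~\ref{prop:nonWH}.

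The main obstacle is this converse, and within it the Lie-theoretic and arithmetic input rather than the functional analysis. One must check, case by case against the classification of real simple Lie algebras, that every type outside the two exceptions genuinely possesses a simple real form of real rank at least two in the same complex type, and that this form can be matched with $G$ at the two real places of a single number field so that restriction of scalars produces an \emph{irreducible} lattice whose projection to the $G$-factor is injective. Confirming that the two borderline types $A_1$ and $D_2=A_1\oplus A_1$ admit \emph{no} such companion---the only rank-$2$ form of $D_2$ being the reducible $\SO(2,2)\cong\SL(2,\R)\times\SL(2,\R)$ with $\Lambda_\WH=1$---is exactly what pins the dividing line at the three groups appearing in (1).
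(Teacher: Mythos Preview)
Your overall plan matches the paper's: the soft implications reduce everything to (1)$\Rightarrow$(3) via Theorem~\ref{thm:WA1} and the contrapositive of (6)$\Rightarrow$(1) via an arithmetic lattice carrying a higher-rank factor. Your argument for (1)$\Rightarrow$(3) is essentially the paper's, except that you invoke preservation of $\Lambda_\WA=1$ under ``extensions with amenable normal kernel''; the paper uses only the inequality $\Lambda_\WA(G)\le\Lambda_\WA(G/Z)$ for \emph{central} $Z$ (equation~\eqref{eq:mod-central}), which is all that is needed here and all that is actually on record.

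Where you diverge is in (6)$\Rightarrow$(1). The paper does two things you do not. First, for real rank $\ge 2$ it simply takes a lattice in $G$ itself (Borel) and is done---no restriction of scalars is needed. Second, for ranks $0$ and $1$ it reduces via subgroup inclusions through the classification to just five test groups $\SO(5)$, $\SO_0(1,4)$, $\SU(3)$, $\SU(1,2)$, $\tilde\SU(1,n)$, and then---this is the key trick you are missing---takes a \emph{cubic} number field with one real and one complex place, so that $G(\mc O)$ sits as a lattice in $G(\R)\times G(\C)$. The companion factor is the \emph{complexification}, whose real rank equals the complex rank of $G$ and is therefore automatically $\ge 2$ once the type is not $A_1$. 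Your plan of using a \emph{real quadratic} field instead forces you to produce, for every $G$, an $F$-form that specialises to $G$ at one real place and to a prescribed rank-$\ge 2$ real form $H$ at the other. That is a genuine Hasse-principle-type existence statement (for every simple type, exceptional ones included) which you flag as the main obstacle but do not resolve; the cubic-field device bypasses it entirely. Note also that Proposition~\ref{prop:nonWH} covers only those five specific groups, not $\SO(n)$ for all $n\ge 5$; the passage from $\SO(5)$ and $\SU(3)$ to the general compact and rank-one cases is a separate inclusion-chain reduction carried out in the proof of Theorem~\ref{thm:main}.
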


The equivalence of (1) and (2) in Theorem~\ref{thm:main} was already done by de~Cornulier \cite[Theorem~1.14]{cornulier-jlt} and in greater generality. His methods are the inspiration for our proof of Theorem~\ref{thm:main}. That (1) implies (2) basically follows from a theorem of Guentner, Higson and Weinberger \cite[Theorem~5.4]{MR2217050}, namely that the discrete group $\GL(2,K)$ has the Haagerup property for any field $K$. Here we prove a similar statement about weak amenability.

\begin{thm}\label{thm:WA1}
Let $K$ be any field. The discrete group $\GL(2,K)$ is weakly amenable with constant 1.
\end{thm}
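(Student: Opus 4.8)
The plan is to reduce, by the standard permanence properties of the weak amenability constant, to finitely generated fields, and there to exploit the rank-one nature of $\GL(2)$ over the completions of $F$. First I would reduce to the finitely generated case. Any field $K$ is the directed union of its finitely generated subfields $F$, so $\GL(2,K)$ is the directed union of the subgroups $\GL(2,F)$; since $\Lambda_{\WA}$ does not increase under directed unions of subgroups, it suffices to prove $\Lambda_{\WA}(\GL(2,F)) = 1$ for every finitely generated $F$. It is convenient to note in addition that $\Lambda_{\WA}$ passes to subgroups and is submultiplicative under finite direct products, and that amenable (in particular abelian) groups have constant $1$; via the determinant sequence $1 \to \SL(2,F) \to \GL(2,F) \to F^\times \to 1$ one may then pass freely between $\GL$ and $\SL$.

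Next, for finitely generated $F$, I would bring in valuation theory. Each rank-one valuation or archimedean absolute value $v$ on $F$ yields a completion $F_v$ and an isometric action of $\GL(2,F)$ on an associated rank-one space $X_v$: a Bruhat--Tits tree in the non-archimedean case, and a real or complex hyperbolic space in the archimedean case. The two facts I would isolate are: (a) for $0<r\le 1$ the radial function $\psi_{v,r}(g) = r^{\ell_v(g)}$, where $\ell_v(g) = \dist(o, g\cdot o)$, is the pullback to $\GL(2,F)$ of a positive definite kernel on $X_v$, hence is a positive definite Herz--Schur multiplier with $\|\psi_{v,r}\|_{B_2} = 1$ (the tree and rank-one case underlying the constants in Theorem~\ref{thm:classification}); and (b) a properness statement, namely that for every $g \neq 1$ only finitely many $v$ move the basepoint, and that the combined length function $\ell = \sum_v \ell_v$ is proper on $\GL(2,F)$. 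Granting these, the pointwise product $\phi_r = \prod_v \psi_{v,r}$, i.e. $\phi_r(g) = r^{\ell(g)}$, is a well-defined positive definite function (for each fixed $g$ only finitely many factors differ from $1$), with $\|\phi_r\|_{B_2} = 1$ and $\phi_r \to 1$ pointwise as $r \uparrow 1$.

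The hard part, and the point at which weak amenability genuinely departs from the Haagerup property, is converting the $\phi_r$ into \emph{compactly} supported (that is, finitely supported) Herz--Schur multipliers without spoiling the norm bound. In the Haagerup setting one needs only the proper, conditionally negative definite function $\ell$, and summing over the possibly infinitely many places of a field of positive transcendence degree is harmless; here one must instead produce finitely supported multipliers whose $B_2$-norms tend to $1$. I would do this by truncating $\phi_r$ using the properness of $\ell$ together with the known, computable behaviour of radial multipliers on trees, so that the completely bounded norm stays controlled under the cut-off; this is where the real analytic work lies.

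I expect two intertwined obstacles. First, since $\R$ and $\C$ have infinite transcendence degree, the relevant fields $F$ really do have positive transcendence degree and infinitely many places, so the completions $F_v$ need not be locally compact and the ambient product of trees is infinite-dimensional; one therefore cannot simply realize $\GL(2,F)$ as a discrete subgroup of a finite product of $\GL(2,\text{local field})$ and invoke restriction to a closed subgroup, and the truncation above must be carried out directly. Second, one must establish the number-theoretic properness in (b): that finitely many places detect each nontrivial element while the total displacement $\ell$ is proper. Controlling the $B_2$-norm through the truncation in the presence of infinitely many places is, I believe, the crux of the argument.
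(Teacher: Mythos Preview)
Your overall strategy matches the paper's---reduce to finitely generated subgroups of $\SL(2,F)$ over a finitely generated field, use rank-one actions (Bruhat--Tits trees for discrete valuations, hyperbolic space for archimedean ones), and exploit the positive definiteness of $r^{\ell_v}$---but two concrete points separate your outline from a proof. First, your statement (b) is not correct as written: ``for every $g\neq 1$ only finitely many $v$ move the basepoint'' fails once archimedean places are included, since a finitely generated field of positive transcendence degree has infinitely many embeddings into $\C$ and each gives $\ell_v(g)>0$ for generic $g$, so $\sum_v\ell_v(g)$ diverges and $\prod_v r^{\ell_v(g)}$ is typically zero. The paper sidesteps this by invoking the Guentner--Higson--Weinberger discrete embeddability theorem, which for a fixed finitely generated subring $A$ selects a countable \emph{sequence} of norms $d_n$ (each archimedean or discrete) such that simultaneous boundedness in all $d_n$ forces finiteness. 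One works with this sequence of pseudo-lengths $\ell_n$, not with all places at once.

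Second, the order of operations in your truncation step is the wrong way round. You propose to form $\phi_r=\prod_v\psi_{v,r}$ and then cut off, but there is no available radial-truncation estimate on an infinite product of trees and hyperbolic spaces. The paper instead truncates on each factor separately, using the Bo\.zejko--Picardello inequality that on a tree the Schur norm of $\chi_{\{\dist=m\}}$ is at most $2m$: thus $\sum_{m\le M}\dot\chi_m\cdot e^{-\dist/n}$ has $B_2$-norm at most $1+\sum_{m>M}2m\,e^{-m/n}$, which for suitable $M=M_n$ is $\le 1+\tfrac1n$. This yields, for each $\ell_n$, multipliers $\phi_{n,k}$ with $\|\phi_{n,k}\|_{B_2}\le 1$ supported in $\ell_n$-balls. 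Only \emph{then} does one multiply: given $\epsilon$ and a finite set, pick $k_n$ with $\phi_{n,k_n}>1-\epsilon_n$ there and $\prod_n(1-\epsilon_n)>1-\epsilon$, arrange $0\le\phi_{n,k}\le 1$ by squaring, and set $\phi=\prod_n\phi_{n,k_n}$. Each factor has norm $\le 1$, so $\|\phi\|_{B_2}\le 1$; and $\supp\phi$ lies in an intersection of $\ell_n$-balls, which is finite by discrete embeddability. Truncate first, then multiply.
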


Theorem~\ref{thm:WA1} is certainly known to experts. The result was already mentioned in \cite[p.~7]{MR3259044} and in \cite{ozawa-talk} with a reference to \cite{MR2217050}, and indeed our proof of Theorem~\ref{thm:WA1} is merely an adaption of the methods developed in \cite{MR2217050}. However, since no published proof is available, we felt the need to include a proof.

To obtain Theorem~\ref{thm:main} we use the classification of simple Lie groups and then combine Theorem~\ref{thm:WA1} with the following results proved in Section~\ref{sec:failure}:
If $G$ is one of the four groups $\SO(5)$, $\SO_0(1,4)$, $\SU(3)$ or $\SU(1,2)$, then $G_\dd$ does not have the weak Haagerup property.
Also, if $G$ is the universal covering group of $\SU(1,n)$ where $n\geq 2$, then $G_\dd$ does not have the weak Haagerup property.

\section{Preliminaries}\label{sec:approx}

Throughout, $G$ will denote a locally compact group. A kernel $\phi\colon G\times G\to\C$ is a \emph{Schur multiplier} if there exist bounded maps $\xi,\eta\colon G\to\mc H$ into a Hilbert space $\mc H$ such that $\phi(g,h) = \la \xi(g),\eta(h)\ra$ for every $g,h\in G$. The Schur norm of $\phi$ is defined as
$$
\|\phi\|_S = \inf\{ \|\xi\|_\infty\|\eta\|_\infty \}
$$
where the infimum is taken over all $\xi,\eta\colon G\to\mc H$ as above. See \cite[Theorem~5.1]{MR1818047} for different characterizations of Schur multipliers. Clearly, $\|\phi\cdot \psi\|_S\leq \|\phi\|_S\cdot\|\psi\|_S$ and $\|\widecheck\phi\|_S=\|\phi\|_S$ when $\phi$ and $\psi$ are Schur multipliers and $\widecheck\phi(x,y)=\phi(y,x)$. Also, any positive definite kernel $\phi$ on $G$ which is normalized, i.e., $\phi(x,x) = 1$ for every $x\in G$, is a Schur multiplier of norm 1. Finally, notice that the unit ball of Schur multipliers is closed under pointwise limits.

A continuous function $\phi\colon G\to\C$ is a \emph{Herz-Schur multiplier} if the associated kernel $\hat\phi(g,h) = \phi(g^{-1}h)$ is a Schur multiplier. The Herz-Schur norm of $\phi$ is defined as $\|\phi\|_{B_2} = \|\hat\phi\|_S$. When $\phi$ is a Herz-Schur multiplier, the two bounded maps $\xi,\eta\colon G\to\mc H$ can be chosen to be continuous (see \cite{MR753889} and \cite{MR1180643}). The set $B_2(G)$ of Herz-Schur multipliers on $G$ is a unital Banach algebra under pointwise multiplication and $\|\cdot\|_\infty\leq\|\cdot\|_{B_2}$. Any continuous, positive definite function $\phi$ on $G$ is a Herz-Schur multiplier with $\|\phi\|_{B_2} = \phi(1)$.

Below we list a number of permanence results concerning weak amenability and the weak Haagerup property, which will be useful later on. General references containing almost all of the results are \cite{MR1372231}, \cite{MR996553}, \cite{UH-preprint} and \cite{K-WH}. Additionally we refer to \cite[Theorem~III.9]{MR1120720} and \cite[Corollary~12.3.12]{MR2391387}.

%\cite[Corollary~12.3.12]{MR2391387}.
Suppose $\Gamma_1$ is a co-amenable subgroup of a discrete group $\Gamma_2$, that is, there exists a left $\Gamma_2$-invariant mean on $l^\infty(\Gamma_2/\Gamma_1)$. Then
\begin{align}\label{eq:co-amenable}
\Lambda_\WA(\Gamma_1)= \Lambda_\WA(\Gamma_2).
\end{align}

%\cite[Proposition~1.3(b)]{MR996553}
If $(G_i)_{i\in I}$ is a directed family of open subgroups in a locally compact group $G$ whose union is $G$, then
\begin{align}\label{eq:union}
\Lambda_\WA(G)=\sup\Lambda_\WA(G_i).
\end{align}

%\cite[Corollary~1.5]{MR996553}
For any two locally compact groups $G$ and $H$
\begin{align}\label{eq:product}
\Lambda_\WA(G\times H)=\Lambda_\WA(G)\Lambda_\WA(H).
\end{align}

When $H$ is a closed subgroup of $G$
\begin{align}\label{eq:subgroup}
\Lambda_\WA(H) \leq \Lambda_\WA(G)
\quad\text{and}\quad
\Lambda_\WH(H) \leq \Lambda_\WH(G).
\end{align}

%\cite[Proposition~1.3]{MR996553} and \cite[Theorem~A]{K-WH}
When $K$ is a compact normal subgroup of $G$ then
\begin{align}\label{eq:mod-compact}
\Lambda_\WA(G/K) = \Lambda_\WA(G)
\quad\text{and}\quad
\Lambda_\WH(G/K) = \Lambda_\WH(G).
\end{align}

%\cite[Theorem~III.9]{MR1120720}
When $Z$ is a central subgroup of a discrete group $G$ then
\begin{align}\label{eq:mod-central}
\Lambda_\WA(G) \leq \Lambda_\WA(G/Z).
\end{align}

%\cite{UH-preprint} and \cite[Theorem~A]{K-WH}
Recall that a \emph{lattice} in a locally compact group $G$ is a discrete subgroup $\Gamma$ such that the quotient $G/\Gamma$ admits a non-trivial finite $G$-invariant Radon measure. When $\Gamma$ is a lattice in a second countable, locally compact $G$ then
\begin{align}\label{eq:lattice}
\Lambda_\WA(\Gamma) = \Lambda_\WA(G)
\quad\text{and}\quad
\Lambda_\WH(\Gamma) = \Lambda_\WH(G).
\end{align}

%\cite[Corollary~5.16]{K-WH}
When $H$ is a finite index, closed subgroup in a group $G$ then
\begin{align}\label{eq:finite-index}
\Lambda_\WH(H) = \Lambda_\WH(G).
\end{align}

\section{Weak amenability of \texorpdfstring{$\GL(2,K)$}{GL(2,K)}}\label{sec:WA}
This section is devoted to the proof of Theorem~\ref{thm:WA1} (see Theorem~\ref{thm:GL} below). The general idea of our proof follows the idea of \cite[Section~5]{MR2217050}, where it is shown that for any field $K$ the discrete group $\GL(2,K)$ has the Haagerup property. Our proof of Theorem~\ref{thm:WA1} also follows the same strategy as used in \cite{MR2764895}.

Recall that a \emph{pseudo-length function} on a group $G$ is a function $\ell\colon G\to [0,\infty)$ such that
\begin{itemize}
	\item $\ell(e)=0$,
	\item $\ell(g)=\ell(g^{-1})$,
	\item $\ell(g_1g_2)\leq \ell(g_1)+\ell(g_2)$.
\end{itemize}
Moreover, $\ell$ is a length function on $G$ if, in addition, $\ell(g)=0 \implies g=e$.

\begin{defi}
We say that the pseudo-length group $(G,\ell)$ is \emph{weakly amenable} if there exist a sequence $(\phi_n)$ of Herz-Schur multipliers on $G$ and a sequence $(R_n)$ of positive numbers such that
\begin{itemize}
\item $\sup_n \|\phi_n\|_{B_2}<\infty$;
\item $\supp \phi_n \subseteq \{g\in G\mid \ell(g)\leq R_n\}$;
\item $\phi_n\to 1$ uniformly on $\{g\in G\mid \ell(g)\leq S\}$ for every $S>0$.
\end{itemize}
The \emph{weak amenability constant} $\Lambda_\WA(G,\ell)$  is defined as the best possible constant $\Lambda$ such that $\sup_n \|\phi_n\|_{B_2} \leq\Lambda$, where $(\phi_n)$ is as just described.
%If no such sequence exists we write $\Lambda_\WA(G,\ell)=\infty$.
\end{defi}
Notice that if the group $G$ is discrete and the pseudo-length function $l$ on $G$ is proper (in particular, $G$ is countable), then the weak amenability of $(G,l)$ is equivalent to the weak amenability of $G$ with same weak amenability constant. On other hand, every countable discrete group admits a proper length function, which is unique up to coarse equivalence (\cite[Lemma~2.1]{MR1871980}). If the group is finitely generated discrete, one can simply take the word-length function associated to any finite set of generators.

The next proposition is a variant of a well-known theorem, which follows from two classical results:
\begin{itemize}
	\item The graph distance $\dist$ on a tree $T$ is a conditionally negative definite kernel \cite{MR520930}.
	\item The Schur multiplier associated with the characteristic function $\chi_n$ of the subset $\{(x,y)\in T^2\mid \dist (x,y)=n\}$ has Schur norm at most $2n$ for every $n\in\N$ \cite[Proposition~2.1]{MR1119263}.
\end{itemize}
The proof below is similar to the proof of {\cite[Corollary~12.3.5]{MR2391387}}.
\begin{prop}\label{prop:tree}
Suppose a group $G$ acts isometrically on a tree $T$ and that $\ell$ is a pseudo-length function on $G$. Suppose moreover $\dist(g.v,v)\to \infty$ if and only if $\ell(g)\to \infty$ for some (and hence every) vertex $v\in T$. Then $\Lambda_\WA(G,\ell)=1$.
\end{prop}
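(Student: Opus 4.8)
The plan is to transport the two classical facts from the tree $T$ to the group $G$ through the orbit map and then to truncate the resulting kernels in order to achieve $\ell$-compact support, controlling the Schur norm of the error by a convergent series.

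First I would fix a base vertex $v\in T$. Since $G$ acts isometrically, the kernel $\dist$ on $T\times T$ is $G$-invariant, i.e.\ $\dist(g.x,g.y)=\dist(x,y)$. As $\dist$ is conditionally negative definite, Schoenberg's theorem makes $(x,y)\mapsto e^{-t\,\dist(x,y)}$ positive definite for each $t>0$; being normalized on the diagonal it has Schur norm $1$ (as recorded in Section~\ref{sec:approx}). Writing $e^{-t\,\dist}=\sum_{n=0}^{\infty}e^{-tn}\chi_n$ as a pointwise convergent sum, I would truncate to
\[
\Phi_{t,N}=\sum_{n=0}^{N}e^{-tn}\chi_n .
\]
This kernel is again $G$-invariant, and the triangle inequality for the Schur norm together with the bound $\|\chi_n\|_S\le 2n$ gives $\|\Phi_{t,N}\|_S\le 1+\sum_{n>N}2n\,e^{-tn}$, where the correction term is finite and tends to $0$ as $N\to\infty$ for each fixed $t$.

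Next I would push these kernels down to $G$. Because each $\Phi_{t,N}$ is $G$-invariant, the pullback $(g,h)\mapsto\Phi_{t,N}(g.v,h.v)$ depends only on $g^{-1}h$ and thus defines a Herz--Schur multiplier $\phi_{t,N}$ on $G$ by $\phi_{t,N}(s)=\Phi_{t,N}(v,s.v)$, with $\|\phi_{t,N}\|_{B_2}\le\|\Phi_{t,N}\|_S$, since precomposing the realizing maps $\xi,\eta$ with the orbit map cannot increase the Schur norm. The support of $\phi_{t,N}$ lies in $\{s:\dist(v,s.v)\le N\}$; invoking the implication ``$\ell(s)\to\infty\Rightarrow\dist(s.v,v)\to\infty$'' shows this set is contained in some $\ell$-ball $\{s:\ell(s)\le R_N\}$, which gives the required compact support. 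Conversely, the implication ``$\dist(s.v,v)\to\infty\Rightarrow\ell(s)\to\infty$'' shows that $\dist(v,s.v)$ is bounded, say by $D(S)$, on each ball $\{s:\ell(s)\le S\}$; hence for $N\ge D(S)$ the truncation is inactive there, so that $\sup_{\ell(s)\le S}|1-\phi_{t,N}(s)|\le 1-e^{-tD(S)}$.

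Finally I would run a diagonal argument: choosing $t_m=1/m$ and then $N_m$ large enough that $\sum_{n>N_m}2n\,e^{-t_m n}<1/m$, the sequence $\phi_m=\phi_{t_m,N_m}$ satisfies $\|\phi_m\|_{B_2}\le 1+1/m$, is supported in $\{s:\ell(s)\le R_{N_m}\}$, and converges to $1$ uniformly on every $\ell$-ball. This yields $\Lambda_\WA(G,\ell)\le 1$, and the reverse inequality is immediate because $\phi_m(e)\to 1$ forces $\liminf_m\|\phi_m\|_{B_2}\ge\liminf_m|\phi_m(e)|=1$. I expect the main point to be the simultaneous bookkeeping in this truncation: one must balance $t\to 0$ (needed for uniform convergence and for $\phi_m(e)\to 1$) against $N\to\infty$ (needed for the support and for the Schur norm to return to $1$), and it is essential that the correction $\sum_{n>N}2n\,e^{-tn}$ can be driven to $0$ at each fixed $t$ \emph{before} letting $t\to 0$, so that the two limits do not compete.
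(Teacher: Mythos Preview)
Your argument is correct and follows essentially the same route as the paper: both use Schoenberg on the tree distance to get the normalized positive definite kernel $e^{-t\,\dist}$, truncate it via the level-set kernels $\chi_n$, bound the truncation error by $\sum_{n>N}2n\,e^{-tn}$ using the Bo\.zejko--Picardello estimate, and then choose $t=1/m$ and $N=N_m$ diagonally. The only cosmetic differences are that the paper pulls back to $G$ before truncating (writing $\dot\chi_m\psi_n$ instead of $e^{-tn}\chi_n$) and omits the explicit verification of $\Lambda_\WA(G,\ell)\ge 1$, which you supply.
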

\begin{proof}
Fix a vertex $v\in T$ as in the assumptions. For every $n\in \N$ we consider the functions $\psi_n(g)=\exp(-\frac{1}{n}\dist(g.v,v))$ and $\dot{\chi}_n(g)=\chi_n(g.v,v)$ defined for $g\in G$. Then
$$
\dot{\chi}_m(g)\psi_n(g)=\exp(-m/n)\dot{\chi}_m(g)
$$
holds for all $g\in G$ and every $n,m\in \N$. As $G$ acts isometrically on $T$, each $\psi_n$ is a unital positive definite function on $G$ by Schoenberg's theorem and $\|\dot{\chi}_n\|_{B_2}\leq 2n$ for every $n\in\N$. It follows that $\|\psi_n\|_{B_2}=1$ and $\|\dot{\chi}_m\psi_n\|_{B_2}\leq 2m\cdot\exp(-m/n)$ for every $n,m\in\N$. Therefore, for any $M\in\N$, we have
\begin{align*}
\left\|\sum_{m=0}^M\dot{\chi}_m\psi_n\right\|_{B_2}\leq \|\psi_n\|_{B_2}+\left\|\sum_{m>M}\dot{\chi}_m\psi_n\right\|_{B_2}\leq 1+\sum_{m>M}2m\cdot\exp(-m/n).
\end{align*}
Hence, if we choose $M_n$ suitably for all $n\in \N$, then the functions $\phi_n=\sum_{m=0}^{M_n}\dot{\chi}_m\psi_n$ satisfy that $\|\phi_n\|_{B_2}\leq 1+\frac1n$ and $\supp\phi_n\subseteq \{g\in G\mid \dist(g.v,v)\leq M_n\}$. The assumption
$$
\dist(g.v,v)\to\infty\iff\ell(g)\to\infty
$$
then insures that $\supp\phi_n\subseteq\{g\in G\mid \ell(g)\leq R_n\}$ for some suitable $R_n$ and that $\phi_n\to 1$ uniformly on $\{g\in G\mid \ell(g)\leq S\}$ for every $S>0$, as desired.
\end{proof}

\begin{rem}
The two classical results listed above have a generalization:
\begin{itemize}
    \item The combinatorial distance $\dist$ on the 1-skeleton of a CAT(0) cube complex $X$ is a conditionally negative definite kernel on the vertex set of $X$ \cite{MR1432323}.
	\item The Schur multiplier associated with the characteristic function of the subset $\{(x,y)\in X^2\mid \dist (x,y)=n\}$ has Schur norm at most $p(n)$ for every $n\in\N$, where $p$ is a polynomial and $X$ is (the vertex set of) a finite-dimensional CAT(0) cube complex \cite[Theorem~2]{MR2381161}.
\end{itemize}
To see that these results are in fact generalizations, we only have to notice that a tree is exactly a one-dimensional CAT(0) cube complex, and in this case the combinatorial distance is just the graph distance. Because of these generalizations and the fact that the exponential function increases faster than any polynomial, it follows with the same proof as the proof of Proposition \ref{prop:tree} that the following generalization is true (see also {\cite[Theorem~3]{MR2381161}}): suppose a group $G$ acts cellularly (and hence isometrically) on a finite-dimensional CAT(0) cube complex $X$ and that $\ell$ is a pseudo-length function on $G$. Suppose moreover $\dist(g.v,v)\to \infty$ if and only if $\ell(g)\to \infty$ for some (and hence every) vertex $v\in X$. Then $\Lambda_\WA(G,\ell)=1$.
\end{rem}

In our context, a \emph{norm} on a field $K$ is a map $d\colon K\to [0,\infty)$ satisfying, for all $x,y\in K$
\begin{enumerate}
	\item[(i)] $d(x)=0$ implies $x=0$,
	\item[(ii)] $d(xy)=d(x)d(y)$,
	\item[(iii)] $d(x+y)\leq d(x)+d(y)$.
\end{enumerate}
A norm obtained as the restriction of the usual absolute value on $\C$ via a field embedding $K\hookrightarrow\C$ is \emph{archimedean}. A norm is \emph{discrete} if the triangle inequality (iii) can be replaced by the stronger ultrametric inequality
\begin{enumerate}
	\item[(iii')] $d(x+y)\leq \max\{d(x),d(y)\}$
\end{enumerate}
and the range of $d$ on $K^{\times}$ is a discrete subgroup of the multiplicative group $(0,\infty)$.

\begin{thm}[{\cite[Theorem~2.1]{MR2217050}}]\label{thm:DE}
Every finitely generated field $K$ is \emph{discretely embeddable:} For every finitely generated subring $A$ of $K$ there exists a sequence of norms $d_n$ on $K$, each either archimedean or discrete, such that for every sequence $R_n>0$, the subset
$$
\{a\in A \mid d_n(a)\leq R_n \text{ for all } n\in \N\}
$$
is finite.
\end{thm}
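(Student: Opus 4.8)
The plan is to prove discrete embeddability by induction on the transcendence degree of $K$ over its prime field $k_0$ (which is $\Q$ or $\mathbb{F}_p$), after isolating two stability statements: (A) if $K$ is discretely embeddable and $L/K$ is finite, then $L$ is discretely embeddable; and (B) if $K$ is discretely embeddable, then so is the rational function field $K(t)$. Granting these, every finitely generated field $K$ is built from $k_0$ by first forming a purely transcendental extension $k_0(t_1,\dots,t_d)$ (iterating (B)) and then taking a finite extension (applying (A)), so it remains only to settle the base cases and prove (A) and (B). For the base, $\mathbb{F}_p$ is trivial since every finitely generated subring is finite, while for $\Q$ one takes $A\subseteq\Z[1/m]$ and uses the usual absolute value together with the $p$-adic norms for the finitely many primes $p\mid m$: the $p$-adic bounds cap the denominators and the archimedean bound then leaves only finitely many rationals.

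For step (A), fix a finite extension $L/K$ and a finitely generated $A\subseteq L$. Each generator of $A$ is algebraic over $K$, so after adjoining finitely many elements of $K$ (coefficients of the relevant minimal polynomials and the inverses of their leading coefficients) I obtain a finitely generated subring $A_0\subseteq K$ over which $A$ is integral; being also a finitely generated $A_0$-algebra, $A$ is a finitely generated $A_0$-module, and after enlarging $A_0$ I may assume $A\subseteq\sum_i A_0 e_i$ for a $K$-basis $e_1,\dots,e_r$ of $L$. As norms on $L$ I take, for each norm $d_n$ on $K$ supplied by the inductive hypothesis applied to $A_0$, all of its finitely many extensions to $L$. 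The point is that $a=\sum_l c_l e_l$ (with $c_l\in A_0$) is recovered from its images under the extensions of a single archimedean place $\sigma$ — the embeddings $L\hookrightarrow\C$ over $\sigma$ — through the linear system $\tau_j(a)=\sum_l\sigma(c_l)\tau_j(e_l)$, whose matrix $(\tau_j(e_l))$ is invertible; thus bounding all extensions of $\sigma$ bounds each $\sigma(c_l)$, and the non-archimedean case is handled analogously through the decomposition of $L$ over the completion $K_v$. Hence every $c_l$ is bounded in all $d_n$, so ranges over a finite set by discrete embeddability of $K$, and therefore so does $a$.

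For step (B), fix a finitely generated $A\subseteq K(t)$ and let $A_0\subseteq K$ be a finitely generated subring containing all coefficients of the numerators and denominators of the generators, so that $A\subseteq A_0[t,\pi_1^{-1},\dots,\pi_r^{-1}]$ for finitely many irreducible $\pi_j\in K[t]$ whose coefficients also lie in $A_0$. As norms on $K(t)$ I would take: the degree valuation and the $\pi_j$-adic valuations (trivial on $K$, used to bound the $t$-degree of numerator and denominator); for each non-archimedean $d_n$ on $K$ its Gauss extension (multiplicative by Gauss's lemma, and bounding it bounds every coefficient in $d_n$); and for each archimedean place $\sigma$ on $K$ the \emph{sequence} of evaluation maps $f\mapsto|\sigma(f)(z_j)|$ at points $z_1,z_2,\dots$ transcendental over $\sigma(K)$. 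Given bounds $R_n$, the $t$-direction valuations first produce a uniform bound $D$ on the degrees, so after clearing the (now finitely many possible) denominators each $a$ becomes a polynomial of degree $\le D$ with coefficients in $A_0$; the Gauss norms bound these coefficients in the non-archimedean $d_n$, while for each $\sigma$ the first $D+1$ evaluation norms together with Lagrange interpolation bound them in $\sigma$. Thus every coefficient is bounded in all inductive norms on $K$, hence lies in a finite set, forcing $a$ into a finite set.

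I expect the genuine obstacle to be the archimedean part of step (B): a single complex embedding of $K(t)$ cannot detect the degree in $t$, so one is forced to spread the information over infinitely many evaluation points and recover the coefficients by interpolation, making essential use of the prior control on the $t$-degree coming from the non-archimedean valuations. The finite-extension step (A) is conceptually routine but requires care in the inseparable and non-archimedean cases to ensure that the system relating $a$ to its coordinates is genuinely invertible.
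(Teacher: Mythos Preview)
The paper does not prove this theorem; it is quoted from Guentner--Higson--Weinberger \cite{MR2217050} as an external input and used as a black box, so there is no in-paper argument to compare against. That said, your outline is essentially the proof given in \cite{MR2217050}: one inducts on the transcendence degree over the prime field via the two stability results you call (A) and (B), and your identification of the delicate point --- the archimedean half of (B), where a single complex place of $K$ must be spread into countably many evaluation norms on $K(t)$ at points transcendental over $\sigma(K)$, with coefficients then recovered by Lagrange interpolation once the $t$-degree has been pinned down by the discrete valuations --- is exactly the crux in the original.

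One small remark on (A): your matrix argument via $(\tau_j(e_l))$ implicitly uses separability (so that there are $[L:K]$ distinct embeddings). This is harmless on the archimedean side since such places only occur in characteristic zero; on the non-archimedean side, and in particular in the inseparable case, the clean way to phrase what you want is that on the finite-dimensional $K_v$-vector space $L\otimes_K K_v$ the max-of-coordinates norm (with respect to the basis $e_1,\dots,e_r$) and the norm coming from the extensions of $d_n$ are equivalent, since all norms on a finite-dimensional vector space over a complete valued field are equivalent. This gives the required bound on each $d_n(c_l)$ without appealing to distinct embeddings and closes the gap you flagged.
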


Let $d$ be a norm on a field $K$. Following Guentner, Higson and Weinberger \cite{MR2217050} define a pseudo-length function $\ell_d$ on $\GL(n,K)$ as follows: if $d$ is discrete
\begin{align*}
\ell_d(g)=\log\max_{i,j}\{d(g_{ij}),d(g^{ij})\},
\end{align*}
where $g_{ij}$ and $g^{ij}$ are the matrix coefficients of $g$ and $g^{-1}$, respectively; if $d$ is archimedean, coming from an embedding of $K$ into $\C$ then
\begin{align*}
\ell_d(g)=\log\max\{\|g\|,\|g^{-1}\|\},
\end{align*}
where $\|\cdot\|$ is the operator norm of a matrix in $\GL(n,\C)$.
\begin{prop}\label{prop:AD}
Let $d$ be an archimedean or a discrete norm on a field $K$. Then the pseudo-length group $(\SL(2,K),\ell_d)$ is weakly amenable with constant $1$.
\end{prop}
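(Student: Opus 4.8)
The plan is to treat the two types of norm separately, in each case reducing the assertion $\Lambda_\WA(\SL(2,K),\ell_d)=1$ to a situation where weak amenability with constant $1$ is already available: an isometric action on a tree when $d$ is discrete, and the weak amenability of the rank-one Lie groups $\SL(2,\R)$ and $\SL(2,\C)$ when $d$ is archimedean.

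For a discrete norm $d$, first I would pass to the associated valuation ring $\mathcal{O}=\{x\in K:d(x)\le 1\}$, which is a discrete valuation ring, hence a principal ideal domain, with a uniformizer $\pi$ (so that $d(\pi)<1$ generates the value group). I would then let $\SL(2,K)$ act by isometries on the Bruhat--Tits tree $T$ whose vertices are the homothety classes of $\mathcal{O}$-lattices in $K^2$; note that $T$ need not be locally finite, but Proposition~\ref{prop:tree} does not require this. Fixing the base vertex $v_0=[\mathcal{O}^2]$, the key computation is that $\ell_d$ and the tree distance to $v_0$ are proportional on $\SL(2,K)$. Using elementary divisors over the PID $\mathcal{O}$, every $g\in\SL(2,K)$ can be written as $g=k_1\,\mathrm{diag}(\pi^a,\pi^{-a})\,k_2$ with $k_1,k_2\in\GL(2,\mathcal{O})$ and $a\ge 0$, the exponents summing to $0$ because $\det g=1$ forces the diagonal part to be unimodular. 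Since the $k_i$ fix $v_0$ and act isometrically, one gets $\dist(g.v_0,v_0)=2a$, while the ultrametric inequality together with the fact that the entries of $k_1,k_2$ lie in $\mathcal{O}$ gives $\ell_d(g)=a\,|\log d(\pi)|$; hence $\dist(g.v_0,v_0)=\tfrac{2}{|\log d(\pi)|}\,\ell_d(g)$. In particular $\dist(g.v_0,v_0)\to\infty$ if and only if $\ell_d(g)\to\infty$, and Proposition~\ref{prop:tree} yields $\Lambda_\WA(\SL(2,K),\ell_d)=1$. This is exactly where passing to $\SL$ rather than $\GL$ is essential: the scalar matrices act trivially on $T$ but have unbounded $\ell_d$, so the biconditional would fail for $\GL(2,K)$.

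For an archimedean norm $d$ coming from an embedding $K\hookrightarrow\C$, I would instead regard $\SL(2,K)$ as a discrete subgroup of $\SL(2,\C)$, or of $\SL(2,\R)$ if the image of $K$ is real. By Theorem~\ref{thm:classification} the ambient Lie group is weakly amenable with constant $1$, so there is a sequence $(\phi_n)$ of compactly supported Herz--Schur multipliers with $\sup_n\|\phi_n\|_{B_2}\le 1$ converging to $1$ uniformly on compact sets (a sequence suffices since $\SL(2,\C)$ is second countable). Restricting each $\phi_n$ to $\SL(2,K)$ preserves the Herz--Schur norm bound, and the point is that $\ell_d(g)=\log\max\{\|g\|,\|g^{-1}\|\}$ is proper with respect to the Lie group topology: the set $\{g:\ell_d(g)\le S\}$ is compact in $\SL(2,\C)$, while conversely every compact set has bounded $\ell_d$. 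Hence the compact support of $\phi_n$ translates into $\supp\phi_n|_{\SL(2,K)}\subseteq\{g:\ell_d(g)\le R_n\}$, and uniform convergence on compact sets translates into uniform convergence on each $\ell_d$-ball. This gives $\Lambda_\WA(\SL(2,K),\ell_d)=1$ in the archimedean case as well.

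The two transfer arguments are routine; the step I expect to require the most care is the proportionality $\dist(g.v_0,v_0)=\tfrac{2}{|\log d(\pi)|}\,\ell_d(g)$ in the discrete case. Establishing it cleanly means checking both that the elementary-divisor (Cartan) decomposition over $\mathcal{O}$ remains available when $K$ is not complete, and that the maximum of the $d$-values of the entries of $g$ and of $g^{-1}$ is genuinely \emph{attained}, not merely bounded, after multiplying the diagonal part by the unimodular factors $k_1,k_2$. The ultrametric inequality makes this attainment argument work precisely because some entry in the relevant column of $k_1$ and row of $k_2$ is a unit, and this is the place where the defining hypotheses on the discrete norm $d$ are really used.
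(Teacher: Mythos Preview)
Your proof is correct and follows essentially the same approach as the paper: the archimedean case reduces to the weak amenability with constant~1 of $\SL(2,\C)$ as a Lie group (the paper cites \cite[Remark~3.8]{MR784292} directly rather than Theorem~\ref{thm:classification}, but this is cosmetic), and the discrete case goes via the Bruhat--Tits tree and Proposition~\ref{prop:tree}. The only difference is that where you unpack the tree construction and the Cartan decomposition to verify the proportionality $\dist(g.v_0,v_0)=\tfrac{2}{|\log d(\pi)|}\,\ell_d(g)$, the paper simply quotes this formula from \cite[Lemma~5.9]{MR2217050}.
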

\begin{proof}
The archimedean case: it is clear that the pseudo-length function on $\SL(2,K)$ is the restriction of that on $\SL(2,\C)$, so clearly we only have to show $(\SL(2,\C),\ell_d)$ is weakly amenable with constant $1$. Since $\ell_d$ is continuous and proper, this follows from the fact that $\SL(2,\C)$ is weakly amenable with constant $1$ as a locally compact group (\cite[Remark 3.8]{MR784292}).

The discrete case: this is a direct application of \cite[Lemma~5.9]{MR2217050} and Proposition~\ref{prop:tree}. Indeed, \cite[Lemma~5.9]{MR2217050} states that there exist a tree $T$ and a vertex $v_0\in T$ such that $\SL(2,K)$ acts isometrically on $T$ and
$$
\dist(g.v_0,v_0)=2\max_{i,j}-\frac{\log d(g_{ij})}{\log d(\pi)},
$$
for all $g=[g_{ij}]\in \SL(2,K)$. Here $\dist$ is the graph distance on $T$ and $\pi$, the uniformizer, is certain element of $\{x\in K\mid d(x)<1\}$. Since the action is isometric, $\dist(g.v_0,v_0)\to \infty$ if and only if $\ell_d(g)\to \infty$. Hence, we are done by Proposition~\ref{prop:tree}.
\end{proof}
\begin{cor}
Let $K$ be a field and $G$ a finitely generated subgroup of $\SL(2,K)$. Then there exists a sequence of pseudo-length functions $\ell_n$ on $G$ such that $\Lambda_\WA(G,\ell_n) = 1$ for every $n$, and such that for any sequence $R_n>0$, the set $\bigcap_n\{g\in G\mid \ell_n(g)\leq R_n\}$ is finite.
\end{cor}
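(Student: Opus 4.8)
The plan is to reduce everything to a finitely generated field and then feed the discrete embeddability theorem (Theorem~\ref{thm:DE}) into Proposition~\ref{prop:AD}. First I would fix a finite generating set $g^{(1)},\dots,g^{(k)}$ of $G$ and let $A\subseteq K$ be the subring generated by all the matrix entries of these generators. Because the inverse of a matrix in $\SL(2,K)$ has entries that are, up to sign, among the entries of the matrix itself, $A$ is automatically stable under passing to inverses, and an easy induction on word length shows that every entry of every element of $G$ lies in $A$; in other words $G\subseteq\SL(2,A)$. Set $K_0=\mathrm{Frac}(A)$, a finitely generated subfield of $K$ containing all the relevant data.

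Next I would apply Theorem~\ref{thm:DE} to $K_0$ and its finitely generated subring $A$ to obtain a sequence of norms $d_n$ on $K_0$, each archimedean or discrete, with the stated finiteness property. For each $n$ let $\ell_n$ be the restriction to $G$ of the pseudo-length function $\ell_{d_n}$ on $\SL(2,K_0)$. By Proposition~\ref{prop:AD}, the pseudo-length group $(\SL(2,K_0),\ell_{d_n})$ is weakly amenable with constant $1$, and restricting the approximating Herz-Schur multipliers to $G$ (restriction does not increase the $B_2$-norm and preserves both the support condition and uniform convergence on balls) shows $\Lambda_\WA(G,\ell_n)\le 1$. Since the lower bound $\Lambda_\WA\ge 1$ always holds---the multipliers converge to $1$ at the identity and $\|\cdot\|_{B_2}\ge\|\cdot\|_\infty$---this gives $\Lambda_\WA(G,\ell_n)=1$ for every $n$.

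Finally I would verify the finiteness statement by unwinding the definition of $\ell_{d_n}$. Fix any sequence $R_n>0$ and suppose $g\in\bigcap_n\{g\in G\mid\ell_n(g)\le R_n\}$. In the discrete case the bound $\ell_{d_n}(g)\le R_n$ directly gives $d_n(g_{ij})\le e^{R_n}$ for every entry $g_{ij}$, and in the archimedean case it gives $\|g\|\le e^{R_n}$, whence $d_n(g_{ij})=|g_{ij}|\le\|g\|\le e^{R_n}$, as the operator norm dominates each matrix coefficient. Thus, with $R_n'=e^{R_n}$, each of the four entries $g_{ij}\in A$ lies in the set $\{a\in A\mid d_n(a)\le R_n'\text{ for all }n\in\N\}$, which is finite by Theorem~\ref{thm:DE}. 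Since there are only finitely many choices for each entry, there are only finitely many such $g$, proving the intersection is finite.

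I expect the only genuinely delicate point to be the bookkeeping in the first step---ensuring that all matrix entries arising in $G$ genuinely land in the finitely generated ring $A$ to which Theorem~\ref{thm:DE} applies---together with the routine but necessary observation that weak amenability with constant $1$ for a pseudo-length group passes to subgroups by restriction of multipliers. The analytic heart of the argument is already contained in Proposition~\ref{prop:AD}, so no new estimates are needed here.
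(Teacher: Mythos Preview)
Your proposal is correct and follows essentially the same approach as the paper: reduce to a finitely generated field via the ring $A$ generated by the matrix entries of a finite generating set, invoke Theorem~\ref{thm:DE} to produce the norms $d_n$, apply Proposition~\ref{prop:AD} to get $\Lambda_\WA(G,\ell_{d_n})=1$, and read off the finiteness from $\bigcap_n\{g\in G\mid\ell_{d_n}(g)\le R_n\}\subseteq\SL(2,F)$ with $F=\{a\in A\mid d_n(a)\le e^{R_n}\text{ for all }n\}$. Your write-up simply makes explicit a few points the paper leaves implicit (the $\SL_2$-specific reason that $A$ is closed under taking inverse entries, the restriction of multipliers to the subgroup, and the separate treatment of the archimedean and discrete cases in the final estimate).
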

\begin{proof}
As $G$ is finitely generated, we may assume that $K$ is finitely generated as well. Now, let $A$ be the finitely generated subring of $K$ generated by the matrix coefficients of a finite generating set for $G$. Clearly, $G\subseteq\SL(2,A)\subseteq\SL(2,K)$. Since $K$ is discretely embeddable, we may choose a sequence of norms $d_n$ on $K$ according to Theorem~\ref{thm:DE}. It follows from Proposition~\ref{prop:AD} that $\Lambda_\WA(G,\ell_{d_n})=1$. We complete the proof by observing that for any sequence $R_n>0$,
\begin{align*}
\bigcap_n\{g\in G\mid \ell_{d_n}(g)\leq R_n\}\subseteq \SL(2,F),
\end{align*}
where $F$ is the finite set $\{a\in A \mid d_n(a)\leq \exp(R_n) \text{ for all } n\in \N\}$.
\end{proof}
\begin{thm}\label{thm:GL}
Let $K$ be a field. Every subgroup $\Gamma$ of $\GL(2,K)$ is weakly amenable with constant $1$ (as a discrete group).
\end{thm}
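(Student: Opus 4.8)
The plan is to reduce the general statement about an arbitrary subgroup $\Gamma\subseteq\GL(2,K)$ to the finitely generated case, where the preceding corollary applies directly. The first observation I would make is that weak amenability with constant $1$ is stable under increasing unions of open subgroups: by equation~\eqref{eq:union}, if $\Gamma$ is the directed union of its finitely generated subgroups $\Gamma_i$ (equipped with the discrete topology, so each $\Gamma_i$ is automatically open in $\Gamma$), then $\Lambda_\WA(\Gamma)=\sup_i\Lambda_\WA(\Gamma_i)$. Hence it suffices to prove that every finitely generated subgroup of $\GL(2,K)$ is weakly amenable with constant $1$, and then pass to the supremum, which will be $1$ provided each $\Gamma_i$ has constant $1$.

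Next I would handle the passage from $\GL(2,K)$ to $\SL(2,K)$, since the corollary is stated for subgroups of $\SL(2,K)$ rather than $\GL(2,K)$. The standard device is to embed $\GL(2,K)$ into $\SL(3,K)$ via $g\mapsto \mathrm{diag}(g,\det(g)^{-1})$, or alternatively to exploit the determinant homomorphism. However, the cleaner route consistent with the tools in the excerpt is to reduce a finitely generated $\Gamma\subseteq\GL(2,K)$ to a finitely generated subgroup of $\SL(2,K')$ for a suitable field extension: one adjoins square roots of the determinants of the generators to $K$, scales to force determinant $1$, and works inside $\SL(2,K')$. Since only finitely many generators are involved, $K'$ can be taken finitely generated over the prime field. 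The key point is that this embedding or rescaling does not increase the weak amenability constant, which I would justify either by \eqref{eq:subgroup} (closed subgroup) or by realizing $\Gamma$ as a subgroup of a group already known to have constant $1$.

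With $\Gamma$ now realized as a finitely generated subgroup of $\SL(2,K)$, the preceding corollary supplies a sequence of pseudo-length functions $\ell_n$ on $\Gamma$ with $\Lambda_\WA(\Gamma,\ell_n)=1$ for every $n$, and with the intersection property that $\bigcap_n\{g\in\Gamma\mid \ell_n(g)\leq R_n\}$ is finite for every choice of radii $R_n>0$. The main obstacle, and the technical heart of the argument, is to manufacture from this \emph{family} of pseudo-length-constrained weakly amenable structures a single genuine weakly amenable approximation of $\Gamma$ with constant $1$ in the ordinary (discrete-group) sense. The idea is a diagonal multiplication: for each $n$ choose a Herz-Schur multiplier $\phi_n^{(k)}$ adapted to $\ell_k$ that approximates $1$ on an $\ell_k$-ball and is supported in a slightly larger $\ell_k$-ball, and form finite products $\Phi_N=\prod_{k=1}^{N}\phi_{n_k}^{(k)}$. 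Since Herz-Schur norms are submultiplicative and each factor can be taken with norm as close to $1$ as desired, the product has norm close to $1$; meanwhile the support of $\Phi_N$ is contained in an intersection $\bigcap_{k=1}^N\{\ell_k\leq R_k\}$, which by the intersection property is eventually finite, yielding finite support. Arranging the parameters so that $\Phi_N\to 1$ pointwise on all of $\Gamma$ while keeping $\sup_N\|\Phi_N\|_{B_2}\leq 1+\epsilon$ for arbitrary $\epsilon>0$ gives $\Lambda_\WA(\Gamma)=1$. Combining this with the reduction steps above completes the proof.
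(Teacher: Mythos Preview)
Your overall architecture matches the paper's: reduce to a finitely generated subgroup of $\SL(2,K)$, invoke the preceding corollary to obtain the family of pseudo-lengths $(\ell_n)$, and then combine the resulting Herz--Schur multipliers by taking products. The reduction step is essentially right; the cleanest way to pass from $\GL(2,K)$ to $\SL(2,K)$ with the listed permanence results is not the $\SL(3,K)$ embedding (which is useless here since the corollary is specific to $\SL(2)$) but rather \eqref{eq:co-amenable}: the subgroup $\Gamma\cap\SL(2,K)$ is normal in $\Gamma$ with abelian quotient (it embeds in $K^\times$ via the determinant), hence co-amenable, so $\Lambda_\WA(\Gamma)=\Lambda_\WA(\Gamma\cap\SL(2,K))$, and then \eqref{eq:union} reduces to the finitely generated case. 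Your square-root workaround is also valid, just less direct.

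The genuine gap is in the product step. You form \emph{finite} products $\Phi_N=\prod_{k=1}^N\phi_{n_k}^{(k)}$ and assert that $\supp\Phi_N\subseteq\bigcap_{k=1}^N\{\ell_k\leq R_k\}$ is ``eventually finite''. But the intersection property from the corollary only guarantees that the \emph{full} intersection $\bigcap_{n=1}^\infty\{\ell_n\leq R_n\}$ is finite; it says nothing about finite sub-intersections, and in general these will be infinite (each $\ell_k$ is only a pseudo-length, so $\{\ell_k\leq R_k\}$ is typically infinite, and a decreasing sequence of infinite sets with finite intersection need not stabilize). So none of your $\Phi_N$ is finitely supported, and you have not produced a weakly amenable approximation. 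The paper fixes this by taking the \emph{infinite} product $\phi=\prod_{n}\phi_{n,k_n}$, whose support then lies in the full intersection and is therefore finite. For this infinite product to make sense pointwise one first replaces each $\phi_{n,k}$ by $|\phi_{n,k}|^2$, forcing $0\leq\phi_{n,k}\leq 1$ so that the partial products decrease in $[0,1]$; the bound $\|\phi\|_{B_2}\leq 1$ then follows because the unit ball of Schur multipliers is closed under pointwise limits. This positivity trick is the missing technical ingredient in your sketch.
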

\begin{proof}
By the permanence results listed in Section~\ref{sec:approx} we can reduce our proof to the case where $\Gamma$ is a finitely generated subgroup of $\SL(2,K)$. It then follows from the previous corollary that there exists a sequence $\ell_n$ of pseudo-length functions on $\Gamma$ such that $\Lambda_\WA(\Gamma,\ell_n)=1$ and for any sequence $R_n>0$, the set $\bigcap_n\{g\in \Gamma\mid \ell_n(g)\leq R_n\}$ is finite.

For each fixed $n\in \N$ there is a sequence ($\phi_{n,k})_k$ of Herz-Schur multipliers on $\Gamma$ and a sequence of positive numbers $(R_{n,k})_k$ such that
\begin{enumerate}
	\item $\|\phi_{n,k}\|_{B_2}\leq 1$ for all $k\in \N$;
	\item $\supp\phi_{n,k}\subseteq \{g\in \Gamma\mid \ell_n(g)\leq R_{n,k}\}$;
	\item\label{item:conv} $\phi_{n,k}\to 1$ uniformly on $\{g\in \Gamma\mid \ell_n(g)\leq S\}$ for every $S>0$ as $k\to \infty$.
\end{enumerate}
Upon replacing $\phi_{n,k}$ by $|\phi_{n,k}|^2$ we may further assume that $0\leq \phi_{n,k}\leq 1$ for all $n,k\in \N$.

Given any $\epsilon>0$ and any finite subset $F\subseteq \Gamma$, we choose a sequence $0<\epsilon_n < 1$ such that $\prod_n(1-\epsilon_n)>1-\epsilon$. It follows from \eqref{item:conv} that for each $n\in \N$ there exists $k_n\in \N$ such that $1-\epsilon_n<\phi_{n,k_n}(g)$ for all $g\in F$. Consider the function $\phi=\prod_n\phi_{n,k_n}$. It is not hard to see that $\phi$ is well-defined, since $0\leq \phi_{n,k_n}\leq 1$. Additionally, since $\|\phi_{n,k_n}\|_{B_2}\leq 1$ for all $n\in \N$ we also have $\|\phi\|_{B_2}\leq 1$. Moreover, $\supp\phi\subseteq \bigcap_n\{g\in \Gamma\mid \ell_n(g)\leq R_{n,k_n}\}$ and
$$
\phi(g)=\prod_n\phi_{n,k_n}(g)>\prod_n(1-\epsilon_n)>1-\epsilon
$$
for all $g\in F$. This completes the proof.
\end{proof}
The remaining part of this section follows de~Cornulier's idea from \cite{cornulier-bbms}. In \cite{cornulier-bbms} he proved the same results for Haagerup property, and the same argument actually works for weak amenability with constant $1$.
\begin{cor}
Let $R$ be a unital commutative ring without nilpotent elements. Then every subgroup $\Gamma$ of $\GL(2,R)$ is weakly amenable with constant $1$ (as a discrete group).
\end{cor}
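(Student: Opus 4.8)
The plan is to deduce this from Theorem~\ref{thm:GL} by embedding $\Gamma$ (or rather each of its finitely generated subgroups) into a \emph{finite} product of groups of the form $\GL(2,K)$ with $K$ a field, and then to run the permanence properties collected in Section~\ref{sec:approx}. The hypothesis that $R$ has no nilpotent elements enters exactly at the point where a reduced ring is written as a subring of a product of fields.

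First I would reduce to the finitely generated case. Since $\Gamma$ is discrete, all of its subgroups are open, and the finitely generated ones form a directed family whose union is $\Gamma$; hence by \eqref{eq:union} we have $\Lambda_\WA(\Gamma)=\sup \Lambda_\WA(\Gamma_0)$ over finitely generated $\Gamma_0\le\Gamma$, so it suffices to treat a finitely generated $\Gamma=\langle g_1,\dots,g_m\rangle$. Let $R_0\subseteq R$ be the unital subring generated by all entries of $g_1,\dots,g_m$ together with all entries of $g_1^{-1},\dots,g_m^{-1}$ (the latter lie in $R$ because each $g_i\in\GL(2,R)$). Then every $g_i$ and $g_i^{-1}$ lies in $M_2(R_0)$, so $\Gamma\subseteq\GL(2,R_0)$, and $R_0$ is finitely generated as a ring and, being a subring of $R$, still has no nilpotent elements.

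Next I would invoke commutative algebra to replace $R_0$ by a finite product of fields. Being finitely generated, $R_0$ is Noetherian (a quotient of a polynomial ring over $\Z$), so it has only finitely many minimal primes $\mathfrak p_1,\dots,\mathfrak p_n$; being reduced, $\bigcap_j \mathfrak p_j=0$. The injection $R_0\hookrightarrow\prod_{j=1}^n R_0/\mathfrak p_j$, composed with the embeddings of the integral domains $R_0/\mathfrak p_j$ into their fraction fields $K_j$, gives a unital ring embedding $R_0\hookrightarrow\prod_{j=1}^n K_j$. Passing to $2\times2$ matrices and using the componentwise identification $\GL(2,\prod_j K_j)=\prod_j\GL(2,K_j)$ (a matrix over a finite product of fields is invertible precisely when each of its components is), I obtain an embedding of discrete groups $\Gamma\hookrightarrow\prod_{j=1}^n\GL(2,K_j)$.

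Finally I would assemble the estimates. By Theorem~\ref{thm:GL} each factor $\GL(2,K_j)$ is weakly amenable with constant $1$, so the finite product is too by \eqref{eq:product} applied inductively; as $\Gamma$ is a subgroup, \eqref{eq:subgroup} gives $\Lambda_\WA(\Gamma)\le 1$, and hence $\Lambda_\WA(\Gamma)=1$. I expect the only genuinely delicate point to be keeping the product of fields \emph{finite}: the product formula \eqref{eq:product} multiplies the constants, so an infinite product (which is all one gets from embedding an arbitrary reduced ring into $\prod_{\mathfrak p}K_{\mathfrak p}$) would be worthless. It is precisely the passage to the finitely generated, hence Noetherian, subring $R_0$, with its finitely many minimal primes, that forces the product to be finite and the resulting constant to remain $1$.
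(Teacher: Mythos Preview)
Your proposal is correct and follows essentially the same route as the paper: reduce to a finitely generated subgroup, pass to a finitely generated (hence Noetherian) reduced subring, embed it into the finite product of the fraction fields at the minimal primes, and then apply Theorem~\ref{thm:GL} together with \eqref{eq:product} and \eqref{eq:subgroup}. The only cosmetic difference is that the paper first reduces from $\GL(2,R)$ to $\SL(2,R)$ via the permanence results, whereas you stay with $\GL(2,K_j)$ throughout; since Theorem~\ref{thm:GL} is stated for $\GL(2,K)$, your variant is equally (if not more) direct.
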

\begin{proof}
Again by the permanence results in Section~\ref{sec:approx}, we may assume that $\Gamma$ is a finitely generated subgroup of $\SL(2,R)$, and hence that $R$ is also finitely generated. It is well-known that every finitely generated ring is Noetherian and in such a ring there are only finitely many minimal prime ideals. Let $\mf p_1,\ldots,\mf p_n$ be the minimal prime ideals in $R$. The intersection of all minimal prime ideals is the set of nilpotent elements in $R$, which is trivial by our assumption. So $R$ embeds into the finite product $\prod_{i=1}^n R/\mf p_i$. If $K_i$ denotes the fraction field of the integral domain $R/\mf p_i$, then $\Gamma$ embeds into $\SL(2,\prod_{i=1}^n K_i)=\prod_{i=1}^n\SL(2,K_i)$. Now, the result is a direct consequence of Theorem~\ref{thm:GL}, \eqref{eq:product} and \eqref{eq:subgroup}.
\end{proof}
\begin{rem}
In the previous corollary and also in Theorem~\ref{thm:GL}, the assumption about commutativity cannot be dropped. Indeed, the group $\SL(2,\mathbb{H})$ with the discrete topology is not weakly amenable, where $\mathbb{H}$ is the skew-field of quaternions. This can be seen from Theorem~\ref{thm:main}. Moreover, $\SL(2,\mathbb H)_\dd$ does not even have the weak Haagerup property by the same argument.
\end{rem}
\begin{rem}
In the previous corollary, the assumption about the triviality of the nilradical cannot be dropped. Indeed, we show now that the group $\SL(2,\Z[x]/x^2)$ is not weakly amenable. The essential part of the argument is Dorofaeff's result that the locally compact group $\R^3\rtimes\SL(2,\R)$ is not weakly amenable \cite{MR1245415}. Here the action $\SL(2,\R)\acts\R^3$ is the unique irreducible 3-dimensional representation of $\SL(2,\R)$.

Consider the ring $R = \R[x]/x^2$. We write elements of $R$ as polynomials $ax + b$ where $a,b\in\R$ and $x^2 = 0$. Consider the unital ring homomorphism $\phi\colon R\to\R$ given by setting $x = 0$, that is, $\phi(ax+b) = b$. Then $\phi$ induces a group homomorphism $\tilde\phi\colon\SL(2,R)\to\SL(2,\R)$. Embedding $\R\subseteq R$ as constant polynomials, we obtain an embedding $\SL(2,\R)\subseteq\SL(2,R)$ showing that $\tilde\phi$ splits. The kernel of $\tilde\phi$ is easily identified as
$$
\ker\tilde\phi = \left\{ \begin{pmatrix}
	a_{11}x+1 & a_{12}x \\
	a_{21}x & a_{22}x+1
\end{pmatrix} \middle\vert
a_{ij}\in\R,\ a_{11}+a_{22} = 0
\right\}
\simeq \mf{sl}(2,\R)
$$
We deduce that $\SL(2,R)$ is the semidirect product $\mf{sl}(2,\R)\rtimes\SL(2,\R)$. A simple computation shows that the action $\SL(2,\R)\acts\mf{sl}(2,\R)$ is the adjoint action. Since $\mf{sl}(2,\R)$ is a simple Lie algebra, the adjoint action is irreducible. By uniqueness of the 3-dimensional irreducible representation of $\SL(2,\R)$ (see \cite[p.~107]{MR0430163}) and from \cite{MR1245415} we deduce that $\mf{sl}(2,\R)\rtimes\SL(2,\R)\simeq\R^3\rtimes\SL(2,\R)$ is not weakly amenable.

It is easy to see that $\SL(2,\Z[x]/x^2)$ is identified with $\mf{sl}(2,\Z)\rtimes\SL(2,\Z)$ under the isomorphism $\SL(2,R)\simeq\mf{sl}(2,\R)\rtimes\SL(2,\R)$. Since $\mf{sl}(2,\Z)\rtimes\SL(2,\Z)$ is a lattice in $\mf{sl}(2,\R)\rtimes\SL(2,\R)$, we conclude from \eqref{eq:lattice} that $\mf{sl}(2,\Z)\rtimes\SL(2,\Z)$ and hence $\SL(2,\Z[x]/x^2)$ is not weakly amenable.
\end{rem}

\begin{rem}
We do not know if $\SL(2,\Z[x]/x^2)$ also fails to have the weak Haagerup property. As $\SL(2,\Z[x]/x^2)$ may be identified with a lattice in $\R^3\rtimes\SL(2,\R)$, by \eqref{eq:lattice} the question is equivalent to the question \cite[Remark~5.3]{HK-WH-examples} raised by Haagerup and the first author concerning the weak Haagerup property of the group $\R^3\rtimes\SL(2,\R)$.
\end{rem}

Recall that a group $\Gamma$ is residually free if for every $g\neq 1$ in $\Gamma$, there is a homomorphism $f$ from $\Gamma$ to a free group $F$ such that $f(g)\neq 1$ in $F$. Equivalently, $\Gamma$ embeds into a product of free groups of rank two. A group $\Gamma$ is residually finite if for every $g\neq 1$ in $\Gamma$, there is a homomorphism $f$ from $\Gamma$ to a finite group $F$ such that $f(g)\neq 1$ in $F$. Equivalently, $\Gamma$ embeds into a product of finite groups. Since free groups are residually finite, it is clear that residually free groups are residually finite. On the other hand, residually finite groups need not be residually free as is easily seen by considering e.g. groups with torsion.
\begin{cor}
Every residually free group is weakly amenable with constant $1$.
\end{cor}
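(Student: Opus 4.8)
The plan is to realize any residually free group as a subgroup of $\SL(2,R)$ for a suitable commutative ring $R$ without nilpotent elements, and then quote the corollary on $\GL(2,R)$ proved just above. As recalled in the paragraph preceding the statement, a residually free group $\Gamma$ embeds into a direct product $\prod_{i\in I}F_i$ of free groups of rank two, where the index set $I$ may be infinite. So the first step is to push each factor into a matrix group over a field: the Sanov subgroup of $\SL(2,\Z)$ generated by $\left(\begin{smallmatrix}1&2\\0&1\end{smallmatrix}\right)$ and $\left(\begin{smallmatrix}1&0\\2&1\end{smallmatrix}\right)$ is free of rank two, and hence every $F_i$ embeds into $\SL(2,\Z)\subseteq\SL(2,\Q)$.

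Next I would assemble these maps into a single linear group over one ring. Taking $R=\prod_{i\in I}\Q$ with coordinatewise operations, and using that determinants over a product ring are computed coordinatewise, one identifies $\prod_{i\in I}\SL(2,\Q)=\SL(2,R)$. Composing the embeddings yields $\Gamma\hookrightarrow\SL(2,R)\subseteq\GL(2,R)$. Since $R$ is a product of fields it is reduced, i.e.\ it has no nonzero nilpotent elements, so the corollary on $\GL(2,R)$ applies directly and gives $\Lambda_\WA(\Gamma)=1$.

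The argument is short once the embedding is set up, so I do not expect a genuine obstacle; the only delicate point is that $R=\prod_{i\in I}\Q$ need not be finitely generated when $I$ is infinite. This is harmless because the corollary on $\GL(2,R)$ is stated for an arbitrary commutative unital ring without nilpotents and reduces internally to the finitely generated case through \eqref{eq:union} and \eqref{eq:subgroup}. If one prefers to avoid infinite products of rings altogether, an alternative is to first invoke \eqref{eq:union} to reduce to a finitely generated $\Gamma$, note that its finitely many matrix entries lie in a finitely generated (still reduced) subring of $R$, and apply the corollary there.
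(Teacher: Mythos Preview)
Your argument is correct and follows essentially the same route as the paper: embed the free group of rank two into $\SL(2,\Z)$, pass to a product over $I$, identify $\prod_{i\in I}\SL(2,-)$ with $\SL(2,\prod_{i\in I}-)$, and apply the preceding corollary on $\GL(2,R)$ for a reduced commutative ring. The only cosmetic difference is that the paper works with $R=\prod_{i\in I}\Z$ rather than $\prod_{i\in I}\Q$; both are reduced, so either choice works.
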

\begin{proof}
Since the free group of rank two can be embedded in $\SL(2,\Z)$, a residually free group embeds in $\prod_{i\in I}\SL(2,\Z)=\SL(2,\prod_{i\in I}\Z)$ for a suitably large set $I$. We complete the proof by the previous corollary. 
\end{proof}
%\begin{rem}\label{rem:resid-free}
%The weak amenability (with constant $1$) is not closed under infinite products. Indeed, $\SL(3,\Z)$ is residually finite group and hence embeds into a product of finite groups. However, $\Lambda_\WA(\SL(3,\Z))=\infty$.
%\end{rem}

\section{Failure of the weak Haagerup property}\label{sec:failure}
In this section we will prove the following result.
\begin{prop}\label{prop:nonWH}
If $S$ is one of the four groups $\SO(5)$, $\SO_0(1,4)$, $\SU(3)$ or $\SU(1,2)$, then $S_\dd$ does not have the weak Haagerup property.

Also, if $S$ is the universal covering group of $\SU(1,n)$ where $n\geq 2$, then $S_\dd$ does not have the weak Haagerup property.
\end{prop}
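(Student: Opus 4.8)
The plan is to handle all cases uniformly. Since $\Lambda_\WH(H)\le\Lambda_\WH(G)$ for any (closed, hence any) subgroup $H\le G$ by \eqref{eq:subgroup}, it suffices to exhibit inside each $S_\dd$ a subgroup $\Gamma$ with $\Lambda_\WH(\Gamma)=\infty$. I would obtain $\Gamma$ as an arithmetic lattice that detects a place of higher rank, in the spirit of de~Cornulier's treatment of the Haagerup property \cite{cornulier-jlt}; the only structural change is that his obstruction, \emph{relative property (T) rules out the Haagerup property}, is replaced by the obstruction that a connected simple group of split rank at least two over a local field has infinite weak Haagerup constant — the non-archimedean analogue of the classification of \cite{HK-WH-examples}. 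Securing this last input at a \emph{non-archimedean} place, where the failure of weak amenability is classical but the failure of the weak Haagerup property must be invoked separately, is what I expect to be the main obstacle.

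For the four linear groups I realize each as $\mathbf G(\R)$ for a well-chosen semisimple $\Q$-group $\mathbf G$. For $\SU(3)$ and $\SU(1,2)$ I take $\mathbf G=\mathrm{SU}(h)$, the unitary group of a Hermitian form $h$ over an imaginary quadratic field $E/\Q$, arranged to have signature $(3,0)$, respectively $(1,2)$, at the real place and so that $\mathbf G(\Q_p)\cong\SL(3,\Q_p)$ for some prime $p$ splitting in $E$. For $\SO(5)$ and $\SO_0(1,4)$ I take $\mathbf G=\SO(Q)$ for a five-variable quadratic form $Q$ over $\Q$ of real signature $(5,0)$, respectively $(1,4)$, and of Witt index two over $\Q_p$ for some prime $p$, so that $\mathbf G(\Q_p)$ is split of rank two; the existence of such global forms is a routine Hasse--Minkowski argument. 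By Borel--Harish-Chandra the group $\Gamma=\mathbf G(\Z[1/p])$ is a lattice in the second countable locally compact group $L=\mathbf G(\R)\times\mathbf G(\Q_p)$, and the archimedean place embeds $\Gamma$ injectively into $\mathbf G(\R)=S$, realizing $\Gamma$ as a subgroup of $S_\dd$. Now \eqref{eq:lattice} gives $\Lambda_\WH(\Gamma)=\Lambda_\WH(L)$, while \eqref{eq:subgroup} applied to the closed factor $\mathbf G(\Q_p)\le L$ gives $\Lambda_\WH(L)\ge\Lambda_\WH(\mathbf G(\Q_p))=\infty$; hence $\Lambda_\WH(\Gamma)=\infty$ and \eqref{eq:subgroup} forces $\Lambda_\WH(S_\dd)=\infty$. (For $\SO_0(1,4)$ I replace $\Gamma$ by the finite-index subgroup landing in the identity component and apply \eqref{eq:finite-index}. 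For the two orthogonal groups one may alternatively avoid the non-archimedean place altogether, taking $Q$ over a totally real field with a second real place of signature $(3,2)$, so that the higher-rank factor is the real group $\SO_0(3,2)$ and the input reduces to the archimedean classification \cite{HK-WH-examples}.)

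The universal cover of $\SU(1,n)$ needs an extra idea, since $\tilde S$ — its center being infinite cyclic — is not a linear group and has no lattices of its own, so the construction above yields no subgroup of $\tilde S$ directly. I would run the construction one level up. Let $\pi\colon\tilde S\to\SU(1,n)$ be the covering map, with central kernel isomorphic to $\Z$, take $\mathbf G=\mathrm{SU}(h)$ with $h$ of signature $(1,n)$ and $\mathbf G(\Q_p)\cong\SL(n+1,\Q_p)$ of rank $n\ge2$, and let $\Gamma=\mathbf G(\Z[1/p])$ be the lattice in $L=\SU(1,n)\times\SL(n+1,\Q_p)$ as above. Form $\tilde L=\tilde S\times\SL(n+1,\Q_p)$ and the covering $q=\pi\times\id\colon\tilde L\to L$, whose kernel is central and isomorphic to $\Z$. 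Then $\tilde\Gamma:=q^{-1}(\Gamma)$ is discrete, and since $\tilde\Gamma\supseteq\ker q$ one has $\tilde L/\tilde\Gamma\cong L/\Gamma$, so $\tilde\Gamma$ is a lattice in $\tilde L$. As before \eqref{eq:lattice} and \eqref{eq:subgroup}, applied to the factor $\SL(n+1,\Q_p)\le\tilde L$, give $\Lambda_\WH(\tilde\Gamma)=\Lambda_\WH(\tilde L)=\infty$. Finally, since $\Gamma$ injects into the archimedean factor of $L$, the first-coordinate projection restricted to $\tilde\Gamma$ is injective, so $\tilde\Gamma$ is isomorphic to a subgroup of $\tilde S_\dd$; by \eqref{eq:subgroup}, $\Lambda_\WH(\tilde S_\dd)=\infty$.

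Apart from the external weak-Haagerup input flagged at the outset, the two checks specific to the covering argument are that $\tilde\Gamma$ is genuinely a lattice in $\tilde L$ (forced by $\tilde\Gamma\supseteq\ker q$) and that $\tilde\Gamma\to\tilde S$ is injective (forced by faithfulness of $\Gamma$ at the archimedean place); both are routine once the arithmetic data are fixed. The real content of the proposition therefore sits in the failure of the weak Haagerup property for the higher-rank local factor, and it is there that I expect the main difficulty to lie.
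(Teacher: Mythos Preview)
Your strategy is correct and in the same de~Cornulier spirit as the paper's, but the paper takes a different (and, given the available inputs, more economical) route to the higher-rank factor. Rather than using a $p$-adic place, the paper views $S/Z$ as $G(\R)$ for an algebraic group $G$ and picks a cubic number field $K$ that is \emph{not} totally real, so that $K$ has exactly one real and one complex place; then $G(\mc O)$ is a lattice in $G(\R)\times G(\C)$, and the pull-back $\Gamma$ to $S\times G(\C)$ is a lattice there which injects into $S$. The point is that the auxiliary factor $G(\C)$ is a connected simple real Lie group of real rank at least two, so \cite[Theorem~B]{HK-WH-examples} applies verbatim and no non-archimedean analogue is needed. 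This single construction handles all four linear groups uniformly and, via the same pull-back you describe, the universal cover $\tilde\SU(1,n)$ as well.

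Your version would certainly work once the non-archimedean weak Haagerup obstruction is available, and you are right to flag that as the crux: the reference \cite{HK-WH-examples} is stated for connected simple Lie groups, and the inequality $\Lambda_\WH(\SL(3,\Q_p))=\infty$ is not something you can read off from that paper (nor from the embedding $\SL(3,\Z)\subset\SL(3,\Z_p)$, since the latter group is compact and the inclusion is not closed-discrete). Your archimedean alternative for the orthogonal groups --- a totally real base field with a second real place of signature $(3,2)$ --- is fine, but note that an archimedean alternative exists for the unitary groups too, namely the complex place used in the paper; so the $p$-adic detour is entirely avoidable. The covering-group part of your argument (pulling the lattice back along $\pi\times\id$ and checking it is still a lattice that injects into $\tilde S$) matches the paper's.
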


When $p,q\geq 0$ are integers, not both zero, and $n = p+q$, we let $I_{p,q}$ denote the diagonal $n\times n$ matrix with $1$ in the first $p$ diagonal entries and $-1$ in the last $q$ diagonal entries. When $g$ is a complex matrix, $g^t$ denotes the transpose of $g$, and $g^*$ denotes the adjoint (conjugate transpose) of $g$. We recall that
\begin{align*}
\SO(p,q) &= \{g\in\SL(p+q,\R) \mid g^t I_{p,q} g = I_{p,q} \} \\
\SO(p,q,\C) &= \{g\in\SL(p+q,\C) \mid g^t I_{p,q} g = I_{p,q} \} \\
\SU(p,q) &= \{g\in\SL(p+q,\C) \mid g^* I_{p,q} g = I_{p,q} \}.
\end{align*}
When $p,q > 0$, the group $\SO(p,q)$ has two connected components, and $\SO_0(p,q)$ denotes the identity component. In particular, by \eqref{eq:finite-index}, the group $\SO(p,q)_\dd$ has the weak Haagerup property if and only if the group $\SO_0(p,q)_\dd$ has the weak Haagerup property.

\begin{proof}[Proof of Proposition~\ref{prop:nonWH}] We follow a strategy that we have learned from de~Cornulier \cite{cornulier-jlt}, where the same techniques are applied in connection with the Haagerup property. The idea of the proof is the following.

If $Z$ denotes the center of $S$, then we consider the group $S/Z$ as a real algebraic group $G(\R)$ with complexification $G(\C)$. Let $K$ be a number field of degree three over $\Q$, not totally real, and let $\mc O$ be its ring of integers. Then by the Borel Harish--Chandra Theorem (see \cite[Theorem~12.3]{MR0147566} or \cite[Proposition~5.42]{morris}), $G(\mc O)$ embeds diagonally as a lattice in $G(\R) \times G(\C)$. If $\Gamma$ is the inverse image in $S\times G(\C)$ of $G(\mc O)$, then $\Gamma$ is a lattice in $S\times G(\C)$.

The group $G(\C)$ has real rank at least two, and we deduce that $\Gamma$ does not have the weak Haagerup property by combining \cite[Theorem~B]{HK-WH-examples} with \eqref{eq:lattice}. The projection $S\times G(\C)\to S$ is injective on $\Gamma$, and hence \eqref{eq:subgroup} implies that $S_\dd$ also does not have the weak Haagerup property.
\end{proof}

\section{Proof of the Main Theorem}
In this section we prove Theorem~\ref{thm:main}. The theorem is basically a consequence of Theorem~\ref{thm:WA1} and Proposition~\ref{prop:nonWH} together with the permanence results listed in Section~\ref{sec:approx} and general structure theory of simple Lie groups.

%We recall that two Lie groups $G$ and $H$ are \emph{locally isomorphic} if there exist open neighborhoods $U$ and $V$ around the identity elements of $G$ and $H$, respectively, and an analytic diffeomorphism $f\colon U\to V$ such that
%\begin{itemize}
%	\item if $x,y,xy\in U$ then $f(xy) = f(x)f(y)$;
%	\item if $x,y,xy\in V$ then $f^{-1}(xy) = f^{-1}(x)f^{-1}(y)$.
%\end{itemize}
When two Lie groups $G$ and $H$ are locally isomorphic we write $G\approx H$. An important fact about Lie groups and local isomorphims is the following \cite[Theorem~II.1.11]{MR514561}: Two Lie groups are locally isomorphic if and only if their Lie algebras are isomorphic.

The following is extracted from \cite[Chapter~II]{MR0015396} and \cite[Section~I.11]{MR1920389} to which we refer for details. If $G$ is a connected Lie group, there exists a connected, simply connected Lie group $\tilde G$ and a covering homomorphism $\tilde G\to G$. The kernel of the covering homomorphism is a discrete, central subgroup of $\tilde G$, and it is isomorphic to the fundamental group of $G$. The group $\tilde G$ is called the \emph{universal covering group} of $G$. Clearly, $\tilde G$ and $G$ are locally isomorphic. Conversely, any connected Lie group locally isomorphic to $G$ is the quotient of $\tilde G$ by a discrete, central subgroup. If $N$ is a discrete subgroup of the center $Z(\tilde G)$ of $\tilde G$, then the center of $\tilde G/N$ is $Z(\tilde G)/N$.

Let $G_1$ and $G_2$ be locally compact groups. We say that $G_1$ and $G_2$ are \emph{strongly locally isomorphic}, if there exist a locally compact group $G$ and finite normal subgroups $N_1$ and $N_2$ of $G$ such that $G_1\simeq G/N_1$ and $G_2\simeq G/N_2$. In this case we write $G_1\sim G_2$. It follows from \eqref{eq:mod-compact} that if $G\sim H$, then $\Lambda_\WH(G_\dd) = \Lambda_\WH(H_\dd)$.

A theorem due to Weyl states that a connected, simple, compact Lie group has a compact universal cover with finite center \cite[Theorem~12.1.17]{MR3025417}, \cite[Theorem~II.6.9]{MR514561}. Thus, for connected, simple, compact Lie groups $G$ and $H$, $G\approx H$ implies $G\sim H$.

\begin{proof}[Proof of Theorem~\ref{thm:main}]
Let $G$ be a connected simple Lie group. As mentioned, the equivalence (1)$\iff$(2) was already done by de~Cornulier \cite[Theorem~1.14]{cornulier-jlt} in a much more general setting, so we leave out the proof of this part. We only prove the two implications (1)$\implies$(3) and (6)$\implies$(1), since the remaining implications then follow trivially.

Suppose (1) holds, that is, $G$ is locally isomorphic to $\SO(3)$, $\SL(2,\R)$ or $\SL(2,\C)$. If $Z$ denotes the center of $G$, then by assumption $G/Z$ is isomorphic to $\SO(3)$, $\PSL(2,\R)$ or $\PSL(2,\C)$. It follows from Theorem~\ref{thm:WA1} and \eqref{eq:mod-compact} that the groups $\SO(3)$, $\PSL(2,\R)$ and $\PSL(2,\C)$ equipped with the discrete topology are weakly amenable with constant 1 (recall that $\SO(3)$ is a subgroup of $\PSL(2,\C)$). From \eqref{eq:mod-central} we deduce that $G_\dd$ is weakly amenable with constant 1. This proves (3).

Suppose (1) does not hold. We prove that (6) fails, that is, $G_\dd$ does not have the weak Haagerup property. We divide the proof into several cases depending on the real rank of $G$. We recall that with the Iwasawa decomposition $G = KAN$, the real rank of $G$ is the dimension of the abelian group $A$.

If the real rank of $G$ is at least two, then $G$ does not have the weak Haagerup property \cite[Theorem~B]{HK-WH-examples}. By a theorem of Borel, $G$ contains a lattice (see \cite[Theorem~14.1]{MR0507234}), and by \eqref{eq:lattice} the lattice also does not have the weak Haagerup property. We conclude that $G_\dd$ does not have the weak Haagerup property.

If the real rank of $G$ equals one, then the Lie algebra of $G$ is isomorphic to a Lie algebra in the list \cite[(6.109)]{MR1920389}. See also \cite[Ch.X \S6]{MR514561}. In other words, $G$ is locally isomorphic to one of the classical groups $\SO_0(1,n)$, $\SU(1,n)$, $\Sp(1,n)$ for some $n\geq 2$ or locally isomorphic to the exceptional group $\FF$. Here $\SO_0(1,n)$ denotes the identity component of the group $\SO(1,n)$.

We claim that the universal covering groups of $\SO_0(1,n)$, $\Sp(1,n)$ and $\FF$ have finite center except for the group $\SO_0(1,2)$. Indeed, $\Sp(1,n)$ and $\FF$ are already simply connected with finite center. The $K$-group from the Iwasawa decomposition of $\SO_0(1,n)$ is $\SO(n)$ which has fundamental group of order two, except when $n = 2$, and hence $\SO_0(1,n)$ has fundamental group of order two as well. As the center of the universal cover is an extension of the center of $\SO_0(1,n)$ by the fundamental group of $\SO_0(1,n)$, the claim follows.

The universal covering group $\tilde\SU(1,n)$ of $\SU(1,n)$ has infinite center isomorphic to the group of integers.

We have assumed that $G$ is not locally isomorphic to $\SL(2,\R)\sim\SO_0(1,2)$ or $\SL(2,\C)\sim\SO_0(1,3)$. If $G$ has finite center, it follows that $G$ is strongly locally isomorphic to one of the groups
\begin{align*}
\begin{array}{ll}
\SO_0(1,n),& n\geq 4, \\
\SU(1,n),& n\geq 2, \\
\Sp(1,n),& n\geq 2, \\
\FF,
\end{array}
\end{align*}
and if $G$ has infinite center, then $G$ is isomorphic to $\tilde\SU(1,n)$. Clearly, there are inclusions
\begin{align*}
\SO_0(1,4)\subseteq\SO_0(1,n),\ n\geq 4,\\
\SU(1,2)\subseteq\SU(1,n),\ n\geq 2,\\
\SU(1,2)\subseteq\Sp(1,n),\ n\geq 2.
\end{align*}
The cases where $G$ is strongly locally isomorphic to $\SO_0(1,n)$, $\SU(1,n)$ or $\Sp(1,n)$ are then covered by Proposition~\ref{prop:nonWH}. Since $\SO(5)\subseteq\SO(9)\sim\Spin(9)\subseteq\FF$ (\cite[\S.4.Proposition~1]{MR560851}), the case where $G\sim\FF$ is also covered by Proposition~\ref{prop:nonWH}. Finally, if $G\simeq\tilde\SU(1,n)$, then Proposition~\ref{prop:nonWH} shows that $G_\dd$ does not have weak Haagerup property.

If the real rank of $G$ is zero, then it is a fairly easy consequence of \cite[Theorem~12.1.17]{MR3025417} that $G$ is compact. Moreover, the universal covering group of $G$ is compact and with finite center.

By the classification of compact simple Lie groups as in Table~IV of \cite[Ch.X \S6]{MR514561} we know that $G$ is strongly locally isomorphic to one of the groups $\SU(n+1)$ ($n\geq 1$), $\SO(2n + 1)$ ($n\geq 2$), $\Sp(n)$ ($n\geq 3$), $\SO(2n)$ ($n\geq 4$) or one of the five exceptional groups
$$
E_6,\ E_7,\ E_8,\ F_4,\ G_2.
$$
By assumption $G$ is not strongly locally isomorphic to $\SU(2)\sim\SO(3)$. Using \eqref{eq:mod-compact} it then suffices to show that if $G$ equals any other group in the list, then $G_\dd$ does not have the weak Haagerup property. Clearly, there are inclusions
\begin{align*}
\SO(5)\subseteq\SO(n),\ n\geq 5,\\
\SU(3)\subseteq\SU(n),\ n\geq 3,\\
\SU(3)\subseteq\Sp(n),\ n\geq 3.
\end{align*}
Since we also have the following inclusions among Lie algebras (Table~V of \cite[Ch.X \S6]{MR514561})
$$
\mf{so}(5)\subseteq\mf{so}(9)\subseteq\mf f_4\subseteq\mf e_6\subseteq\mf e_7\subseteq\mf e_8
$$
and the inclusion (\cite{MR571796})
$$
\SU(3)\subseteq G_2,
$$
it is enough to consider the cases where $G = \SO(5)$ or $G = \SU(3)$. These two cases are covered by Proposition~\ref{prop:nonWH}. Hence we have argued that also in the real rank zero case $G_\dd$ does not have the weak Haagerup property.
\end{proof}

\section*{Acknowledgements}
The authors wish to thank U.~Haagerup for helpful discussions on the subject.

%\bibliographystyle{plain}
%\bibliography{knudbybib}

\end{document}